\newtheorem{theorem}{Theorem}[section]
\newtheorem{lemma}[theorem]{Lemma}
\newtheorem{corollary}[theorem]{Corollary}
\newtheorem{proposition}[theorem]{Proposition}
\newtheorem{definition}[theorem]{Definition}
\newtheorem{remark}[theorem]{Remark}
\numberwithin{equation}{section}
\title{Strong Central 2-Trees with Tail Degrees \texorpdfstring{$\{2,3\}$}{\{2,3\}}:\\Structural Characterization and Uniqueness Criteria}
\author{Julian Allagan$^{1,*}$, Shawn Langley, Weizheng Gao \\ Mohamed Elbakary
 \\[0.3cm]
    \small Department of Mathematics, Computer Science, and Engineering Technology\\
    \small Elizabeth City State University, Elizabeth City, NC 27909, USA\\[0.3cm]
    \small $^{1}$\textit{E-mail:} \texttt{adallagan@ecsu.edu}\\
     \small $^{*}$Corresponding author
}
\date{}
\begin{document}
\maketitle

\begin{abstract}
We study strong $r$-central $2$-trees whose non-central vertices have degrees in $\{2,3\}$, focusing on the cases $r=1,2,3$. 
For each $r$, we derive exact degree constraints relating the maximum degree $\Delta$ to the numbers of degree-$3$ and degree-$2$ tail vertices.
In the unicentral case ($r=1$), we prove that the fan graph is the unique realization for all $n\ge 3$.
For bicentral $2$-trees ($r=2$), we show that the number of degree-$3$ vertices is always even, establish sharp uniqueness results for $x\in\{0,2\}$, prove existence for all feasible values of $\Delta$, and obtain linear lower bounds on the number of non-isomorphic realizations.
For tricentral $2$-trees ($r=3$), we characterize extremal configurations, establish a divisibility constraint on the tail parameters, and prove a quadratic lower bound on the number of non-isomorphic graphs for infinitely many values of $n$.
These results provide a unified structural framework for central $2$-trees with bounded tail degrees and highlight sharp transitions between rigidity and combinatorial growth.
\end{abstract}

\section{Introduction}

\subsection{Background and Related Work}

The study of degree sequences in graph theory has a rich history dating back to the foundational work of Paul Erd\H{o}s and Tibor Gallai. 
In their seminal 1960 paper \cite{erdos1960}, Erd\H{o}s and Gallai established necessary and sufficient conditions for a finite sequence of nonnegative integers to be realizable as the degree sequence of a simple graph.
Their theorem, now known as the Erd\H{o}s--Gallai theorem, states that a nonincreasing sequence $(d_1,\ldots,d_n)$ of nonnegative integers is \emph{graphic} (realizable as a degree sequence) if and only if the sum $\sum_{i=1}^n d_i$ is even and
\begin{equation}\label{eq:erdos-gallai}
\sum_{i=1}^k d_i \;\le\; k(k-1) + \sum_{i=k+1}^n \min(d_i,k)
\end{equation}
holds for every $k \in \{1,2,\ldots,n\}$.
This characterization resolved the \emph{graph realization problem} and provided a polynomial-time algorithm for recognizing graphic sequences, though the original proof was quite involved \cite{choudum1986}.
Alternative proofs have since been developed using network flows \cite{berge1973}, induction \cite{choudum1986}, and constructive methods \cite{tripathi2010}, reflecting the fundamental importance of this result.

The Erd\H{o}s--Gallai theorem addresses the general case of unrestricted simple graphs.
However, many applications in engineering, computer science, and combinatorics require graphs with additional structural constraints.
This naturally leads to the question: \emph{which degree sequences are realizable within restricted graph classes?}
For specific classes such as trees, bipartite graphs, planar graphs, and chordal graphs, refined characterizations have been developed \cite{gale1957,hakimi1962,asratian1998}.

Among the most important restricted classes are the \emph{$k$-trees}, a family of graphs intimately connected to treewidth, chordal structure, and rigidity theory.
A $k$-tree is constructed recursively: starting from the complete graph $K_{k+1}$, new vertices are added by connecting each new vertex to all members of an existing $k$-clique.
The special case $k=2$ defines the \emph{2-trees}, which can be characterized as graphs with $n$ vertices, $2n-3$ edges, and the property that every edge lies in exactly one triangle.
Equivalently, 2-trees are the edge-maximal chordal graphs with treewidth 2, and they form a fundamental subclass of perfect graphs \cite{golumbic2004}.

\subsection{The Bose et al.\ Characterization and Its Limitations}

The degree sequence characterization problem for 2-trees was resolved by Bose, Dujmovi\'c, Krizanc, Langerman, Morin, Wood, and Wuhrer in 2008 \cite{bose2008}, as stated in the following theorem.

\begin{theorem}[Bose et al.\ \cite{bose2008}]
\label{thm:bose}
A sequence $D=(d_1,\dots,d_n)$ is a 2-tree degree sequence if and only if the following conditions hold:
\begin{enumerate}
\item[(i)] $\sum_{i=1}^n d_i = 4n-6$,
\item[(ii)] $\max_i d_i \le n-1$,
\item[(iii)] at least two entries of $D$ equal $2$,
\item[(iv)] $D\notin\{\langle 2^{(n-4)},d^{(4)}\rangle:d\ge 5\}$,
\item[(v)] if all $d_i$ are even, then the number $n_2$ of degree-2 vertices satisfies $n_2\ge n/3+1$.
\end{enumerate}
\end{theorem}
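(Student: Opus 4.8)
The plan is to prove the two directions separately, with the converse (sufficiency) carrying essentially all of the difficulty. For \emph{necessity}, suppose $D$ is realized by a 2-tree $T$ on $n$ vertices. Condition (i) follows by induction on the recursive construction: $K_3$ has $3 = 2\cdot 3 - 3$ edges, and each added vertex contributes exactly two new edges, so $T$ has $2n-3$ edges and $\sum_i d_i = 4n-6$. Condition (ii) is immediate, since no vertex can exceed $n-1$ neighbors. For (iii), I would invoke the standard fact that a 2-tree is chordal and $K_4$-free: any simplicial vertex has a clique neighborhood of size at most $2$, hence degree exactly $2$, and a non-complete chordal graph has at least two simplicial vertices, so $T$ has at least two degree-$2$ vertices.

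The exceptional conditions (iv) and (v) are the delicate part of necessity. For (iv), I would observe that a sequence $\langle 2^{(n-4)}, d^{(4)}\rangle$ forces, via (i), the value $d=(n+1)/2$, so all $n-4$ degree-$2$ vertices must attach to edges spanned by just four ``heavy'' vertices; bounding the number of edges and triangles available among four vertices of a $K_4$-free 2-tree against the count of degree-$2$ vertices that must be hung on them yields a contradiction once $d\ge 5$. For (v), when every degree is even each non-degree-$2$ vertex has degree at least $4$, and the naive count $4n-6 = \sum_i d_i \ge 2n_2 + 4(n-n_2)$ gives only $n_2\ge 3$; the sharper bound $n_2 \ge n/3+1$ must instead be extracted from the tree of triangles underlying $T$, analyzing how an all-even distribution constrains the branching of that structure. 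I expect (v) to require the most care on the necessity side.

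For \emph{sufficiency} I would argue by strong induction on $n$, using the inverse of the construction step: deleting a degree-$2$ vertex $v$ with neighbors $a,b$ decrements $d_a,d_b$ by one and removes $v$, yielding a sequence $D'$ on $n-1$ vertices with $\sum_i d_i' = 4(n-1)-6$. The program is to (1) choose which two entries of $D$ to decrement so that $D'$ again satisfies (i)--(v); (2) apply the induction hypothesis to realize $D'$ as a 2-tree $T'$; and (3) re-attach $v$ to an edge of $T'$ whose endpoints are the two decremented vertices. Step (3) forces a strengthening of the hypothesis: one must guarantee not merely \emph{some} realization of $D'$, but one in which a prescribed pair of (to-be-incremented) vertices is \emph{adjacent}. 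I would therefore carry the stronger statement ``$D$ is realizable with a chosen pair of maximum-degree vertices joined by an edge'' through the induction and verify that this property survives the reduction.

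The main obstacle, and where the bulk of the casework lives, is reconciling the choice in step (1) with the exclusions (iv) and (v). Decrementing the two largest entries is the natural move, but it can push $D'$ into the forbidden family of condition (iv) for $n-1$ vertices, or break the all-even bound of (v), so the reduction must branch according to whether $D$ sits near these boundary configurations and, in those cases, decrement a different pair or peel off a different degree-$2$ vertex. Handling a bounded list of small base cases together with these boundary sequences directly---exhibiting an explicit realization for each, typically a fan or near-fan---is what completes the argument. I expect the crux to be verifying that, outside this explicitly handled boundary, the reduction never leaves the feasible region cut out by (i)--(v).
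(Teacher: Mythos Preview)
The paper does not prove Theorem~\ref{thm:bose}; it is quoted verbatim as a result of Bose, Dujmovi\'c, Krizanc, Langerman, Morin, Wood, and Wuhrer~\cite{bose2008} and used as a black box throughout. There is therefore no ``paper's own proof'' to compare your proposal against: the authors invoke only conditions~(ii) and~(iii) in a few places (e.g.\ Corollaries~\ref{cor:delta-range-general} and~\ref{cor:r3-delta-range}) and never revisit the characterization itself.

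On the substance of your sketch: the overall architecture---necessity by edge-count, simplicial vertices in chordal $K_4$-free graphs, and a separate structural argument for the exclusions (iv)--(v); sufficiency by peeling a degree-$2$ vertex and carrying a strengthened induction hypothesis guaranteeing adjacency of a designated pair---is indeed the shape of the original Bose et~al.\ proof. Two places in your outline would need more than cosmetic work. First, your proposed attack on~(iv) via ``bounding the number of edges and triangles available among four vertices'' does not go through directly, since in a $2$-tree a single edge may lie in arbitrarily many triangles (witness the book $B_k$); the actual obstruction is that repeatedly stripping degree-$2$ vertices must terminate in a $2$-tree on four vertices, whose degree sequence is $(3,3,2,2)$, and one then has to argue that no pattern of leaf attachments can equalize all four degrees once $d\ge 5$. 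Second, for sufficiency the strengthened hypothesis you propose (a designated \emph{maximum-degree} pair is adjacent) is not quite the right invariant; the original argument tracks a somewhat different adjacency guarantee, and the case analysis near the boundaries of~(iv) and~(v) is more extensive than your sketch suggests.
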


While the Bose et al.\ theorem \ref{thm:bose} completely characterizes 2-tree degree sequences, it treats all realizations equally and does not distinguish between graphs with different structural features.
In particular, their characterization does not address structural heterogeneity: for a given degree sequence, there may exist many non-isomorphic 2-tree realizations with vastly different properties such as connectivity, maximal degrees, and diameter.
The characterization provides no insight into how to enumerate or classify distinct realizations.
Furthermore, it does not distinguish \emph{central} 2-trees, in which designated vertices of high degree form a core clique, from arbitrary realizations.
Many applications require 2-trees with such a core structure, where the remaining vertices have restricted degrees, but the general characterization does not address these refined constraints.

In network design, structural engineering, and machine learning applications, it is often desirable to restrict the degrees of non-central vertices to a small set such as $\{2,3\}$ to simplify construction, reduce cost, or improve algorithmic efficiency.
The Bose et al.\ characterization does not identify which degree sequences admit unique realizations or characterize extremal structures that maximize or minimize specific graph invariants.
These limitations motivate a more refined investigation: \emph{what can be said about 2-trees when we impose additional structural constraints, such as centrality conditions and restricted tail degrees?}

\begin{definition}[Central 2-trees]
\label{def:central}
Let $r\in\{1,2,3\}$ and $\Delta\ge 2$. 
A 2-tree on $n$ vertices is \emph{$r$-central with maximum degree $\Delta$} if exactly $r$ vertices have degree $\Delta$.
These vertices form the \emph{core}, and the remaining $n-r$ vertices form the \emph{tail}.
A realization is \emph{strong} if the core induces $K_r$.
We say the 2-tree has \emph{tail set} $S=\{2,3\}$ if every tail vertex has degree 2 or 3.
\end{definition}

\subsection{Motivation and Significance}

This paper addresses the gap previously identified by studying \emph{strong central $2$-trees} with prescribed core size and restricted tail degrees.
We focus on $2$-trees in which exactly $r\in\{1,2,3\}$ vertices attain the maximum degree $\Delta$, these vertices induce a clique $K_r$ (the \emph{core}), and all remaining vertices (the \emph{tail}) have degrees in $\{2,3\}$.
This setting isolates a natural class of sparse yet highly structured graphs that interpolate between rigidity and combinatorial flexibility.

The first motivation arises from \emph{structural rigidity theory}.
By Laman’s theorem \cite{laman1970}, $2$-trees form canonical models of minimally rigid frameworks in the plane.
Because every $2$-tree satisfies the Laman sparsity condition automatically, such graphs are fundamental in the analysis of rigid structures in architecture, mechanical engineering, and robotics \cite{graver2001,guest2006}.
Central $2$-trees with bounded tail degrees model frameworks with a small number of highly connected load-bearing vertices and many lightly connected elements, a configuration common in truss systems, space frames, and deployable structures \cite{pellegrino1993,connelly2005}.

A second motivation comes from \emph{algorithmic graph theory}.
Graphs of bounded treewidth, including $2$-trees, admit efficient algorithms for a wide range of otherwise intractable problems via dynamic programming on tree decompositions \cite{bodlaender1997,courcelle1990}.
In probabilistic graphical models, chordal graphs underpin the junction tree algorithm, enabling tractable inference in Bayesian networks \cite{lauritzen1988,koller2009}.
Central $2$-trees with restricted tail degrees offer additional advantages: the core clique provides a natural decomposition anchor, while the bounded tail degrees simplify state spaces in dynamic programming.
Such structures appear in network design \cite{deo1974}, VLSI layout \cite{lengauer1990}, and constraint satisfaction \cite{freuder1990}.

Third, from a combinatorial standpoint, central $2$-trees with tail degree restrictions exhibit striking rigidity and growth phenomena.
Imposing both centrality and degree constraints drastically reduces the space of realizable graphs.
In the unicentral case, the structure is completely rigid and unique for every $n$.
In the bicentral case, uniqueness holds only in extremal regimes, while interior parameter values admit multiple non-isomorphic realizations.
In the tricentral case, rigidity breaks down further: although extremal configurations remain unique, we prove that the number of non-isomorphic realizations grows at least quadratically for infinitely many $n$.
These results contribute to the broader program of understanding degree-restricted graph classes and their enumerative behavior \cite{aigner1994,stanley2011}.

\subsection{Contributions and Organization}

This work provides a unified structural and enumerative analysis of strong central $2$-trees with tail degrees in $\{2,3\}$ for core sizes $r\in\{1,2,3\}$.
For the unicentral case $r=1$, we prove a complete rigidity result: for every $n\ge 3$, there exists a unique strong unicentral $2$-tree with tail set $\{2,3\}$, namely the fan graph $\Phi_n$.

For the bicentral case $r=2$, we identify and exploit parity constraints on the number of degree-$3$ tail vertices.
We prove that the extremal configuration with no degree-$3$ tails yields the unique triangular book graph $B_{n-2}$, and that the next admissible case with exactly two degree-$3$ tails is also unique for all sufficiently large $n$.
Beyond these rigid regimes, we show that all theoretically feasible maximum degrees are realized by explicit constructions and that the number of non-isomorphic strong bicentral $2$-trees grows at least linearly with the number of vertices.

For the tricentral case $r=3$, we derive exact parameter relations and divisibility constraints governing the tail degrees.
We characterize the extremal all-degree-$2$ configuration achieving the maximum possible degree $\Delta=2n/3$ when $3$ divides $n$, and we prove a quadratic lower bound on the number of non-isomorphic strong tricentral $2$-trees for infinitely many values of $n$.
This establishes a clear transition from structural rigidity to rapid combinatorial growth as the core size increases.

In addition to these theoretical results, we present complete enumerative data for all strong central $2$-trees with $r\in\{1,2,3\}$, tail set $\{2,3\}$, and up to $12$ vertices.
These data illustrate the sharpness of the theoretical bounds and highlight the emergence of multiple non-isomorphic realizations in the bicentral and tricentral cases.

The paper is organized as follows.
Section~2 develops universal algebraic constraints relating the maximum degree and tail parameters for strong central $2$-trees with tail set $\{2,3\}$.
Section~3 treats the unicentral case and establishes complete uniqueness.
Section~4 analyzes the bicentral case, including parity constraints, uniqueness criteria, existence theorems, and growth bounds.
Section~5 examines the tricentral case, characterizing extremal configurations and proving quadratic growth.
Section~6 concludes with open problems and directions for future research.

Before analyzing individual core sizes, we isolate the algebraic constraints that are common to all strong central $2$-trees with tail degrees in $\{2,3\}$. These relations depend only on the number of central vertices, the order of the graph, and the maximum degree, and they determine the degree sequence uniquely. This general framework serves as the foundation for the case-by-case structural
and enumerative analysis that follows.

\section{Degree constraints for central $2$-trees with tail set $\{2,3\}$}

Let $G$ be a strong $r$-central $2$-tree on $n$ vertices with maximum degree $\Delta$.
Assume that exactly $r$ vertices attain degree $\Delta$ and that every remaining
vertex has degree $2$ or $3$.
Let $x$ and $y$ denote the numbers of degree-$3$ and degree-$2$ tail vertices,
respectively.
Then
\[
x+y=n-r.
\]

Since every $2$-tree on $n$ vertices has $2n-3$ edges, the degree sum satisfies
\[
\sum_{v\in V(G)} \deg(v) = 4n-6.
\]
Accounting for the contributions of the core and tail vertices yields
\begin{equation}\label{eq:deg-sum}
r\Delta + 3x + 2y = 4n-6.
\end{equation}

Solving \eqref{eq:deg-sum} together with $x+y=n-r$ gives the following universal
parameter relations.

\begin{lemma}[Tail parameters for central $2$-trees]
\label{lem:xy-general}
Let $G$ be a strong $r$-central $2$-tree on $n$ vertices with maximum degree $\Delta$
and tail set $\{2,3\}$.
Then
\begin{align}
x &= 2n + 2r - 6 - r\Delta, \label{eq:x-general}\\
y &= r\Delta - n - 3r + 6. \label{eq:y-general}
\end{align}
Consequently, once $(n,r,\Delta)$ is fixed, the degree sequence of $G$ is uniquely
determined up to ordering:
\[
D=(\Delta^{(r)},3^{(x)},2^{(y)}).
\]
\end{lemma}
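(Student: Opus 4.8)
The plan is to treat the two stated relations as the unique solution of a non-degenerate linear system in the unknowns $x$ and $y$. The setup already supplies two linear equations: the tail-count relation $x+y=n-r$ and the degree-sum relation \eqref{eq:deg-sum}, namely $r\Delta+3x+2y=4n-6$. Since every tail vertex has degree exactly $2$ or $3$ and the $r$ core vertices each have degree $\Delta$, these two equations capture all the arithmetic information carried by the degree sequence, so it suffices to solve them.

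First I would eliminate $y$ using the count relation, writing $y=n-r-x$ and substituting into the degree-sum equation. This merges the two degree-$2$ contributions and leaves a single equation in $x$ alone; after collecting terms the coefficient of $x$ reduces to $1$, yielding $x=2n+2r-6-r\Delta$ directly, which is \eqref{eq:x-general}. Back-substituting into $y=n-r-x$ then gives \eqref{eq:y-general}. Equivalently, one may observe that the coefficient matrix $\left(\begin{smallmatrix}1&1\\3&2\end{smallmatrix}\right)$ has determinant $-1$, so the system is invertible and its solution is unique; applying the explicit inverse produces both formulas simultaneously.

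For the final claim, I would note that a degree sequence of $G$ is simply the multiset of its vertex degrees. By hypothesis this multiset consists of $r$ entries equal to $\Delta$ together with the tail entries, each of which is $2$ or $3$. The number of $3$'s is $x$ and the number of $2$'s is $y$, both of which were just shown to be functions of $(n,r,\Delta)$ alone. Hence the entire multiset---and therefore the degree sequence up to ordering, written $D=(\Delta^{(r)},3^{(x)},2^{(y)})$---is determined by the triple $(n,r,\Delta)$.

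There is no substantive obstacle here: the argument is the solution of a $2\times 2$ linear system whose matrix is unimodular, so uniqueness is automatic. The only point meriting a word of care is implicit feasibility---these formulas pin down a \emph{genuine} degree sequence only when the resulting $x$ and $y$ are nonnegative integers and the configuration is actually realizable as a $2$-tree. Since the lemma asserts the determination under the standing assumption that such a $G$ already exists, nonnegativity and integrality of $x$ and $y$ are guaranteed, and no extra case analysis is required at this stage; the feasibility ranges of $\Delta$ are instead the subject of the case-by-case sections that follow.
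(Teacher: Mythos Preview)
Your proposal is correct and mirrors the paper's own proof: both substitute $y=n-r-x$ into the degree-sum relation $r\Delta+3x+2y=4n-6$, solve the resulting linear equation for $x$, and back-substitute to obtain $y$. Your additional remarks on the unimodular coefficient matrix and on feasibility are fine embellishments but not part of the paper's argument.
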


\begin{proof}
From $y=n-r-x$, substituting into \eqref{eq:deg-sum} gives
\[
r\Delta + 3x + 2(n-r-x) = 4n-6,
\]
which simplifies to $x = 2n + 2r - 6 - r\Delta$.
Substituting into $y=n-r-x$ yields \eqref{eq:y-general}.
\end{proof}

The non-negativity of the tail parameters imposes immediate constraints on the maximum degree.

\begin{corollary}[Necessary range for $\Delta$]
\label{cor:delta-range-general}
Let $G$ be as in Lemma~\ref{lem:xy-general}. Then
\[
\frac{n+3r-6}{r}
\;\le\;
\Delta
\;\le\;
\frac{2n+2r-6}{r}.
\]
In addition, every $2$-tree satisfies $\Delta\le n-1$.
\end{corollary}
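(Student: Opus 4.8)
The plan is to read the range off directly from the closed-form expressions in Lemma~\ref{lem:xy-general}, using the single structural fact that $x$ and $y$ count vertices and are therefore nonnegative. Since $x$ is the number of degree-$3$ tail vertices and $y$ the number of degree-$2$ tail vertices, any genuine realization must satisfy $x\ge 0$ and $y\ge 0$. These two inequalities, together with the explicit formulas \eqref{eq:x-general} and \eqref{eq:y-general}, will produce the two sides of the claimed bound.

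Concretely, I would first substitute $x=2n+2r-6-r\Delta$ and impose $x\ge 0$; solving for $\Delta$ (and using $r\ge 1>0$, so the inequality direction is preserved upon dividing) gives the upper bound $\Delta\le(2n+2r-6)/r$. Next I would substitute $y=r\Delta-n-3r+6$ and impose $y\ge 0$; solving for $\Delta$ yields the lower bound $\Delta\ge(n+3r-6)/r$. Combining the two gives the two-sided inequality exactly as stated. Each step is a one-line rearrangement, so no estimate or auxiliary construction is needed.

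The only point requiring separate justification is the final inequality $\Delta\le n-1$, which does \emph{not} follow from the nonnegativity of the tail parameters: the degree-sum algebra of Section~2 is insensitive to the $n-1$ cap. Instead this bound is an intrinsic property of $2$-trees, recorded as condition~(ii) of Theorem~\ref{thm:bose}, reflecting that no vertex in a simple graph on $n$ vertices can have degree exceeding $n-1$ (and a $2$-tree vertex of degree $n-1$ is adjacent to all others). I would therefore cite condition~(ii) directly rather than attempting to rederive it. I expect no real obstacle in this argument; the main thing to be careful about is keeping the two bounds attached to the correct tail parameter, since it is $x\ge 0$ that caps $\Delta$ from above while $y\ge 0$ bounds it from below, and reversing them would invert the interval.
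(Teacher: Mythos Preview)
Your proposal is correct and matches the paper's proof essentially line for line: the paper also derives the upper bound from $x\ge 0$ via \eqref{eq:x-general}, the lower bound from $y\ge 0$ via \eqref{eq:y-general}, and cites Theorem~\ref{thm:bose}(ii) for $\Delta\le n-1$. Your added remarks about preserving the inequality direction when dividing by $r>0$ and about not confusing which tail parameter yields which bound are helpful clarifications but do not change the argument.
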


\begin{proof}
From $x\ge0$ in \eqref{eq:x-general} we obtain
$\Delta\le (2n+2r-6)/r$.
From $y\ge0$ in \eqref{eq:y-general} we obtain
$\Delta\ge (n+3r-6)/r$.
The bound $\Delta\le n-1$ follows from Theorem~\ref{thm:bose}(ii).
\end{proof}

Bose et al.~\cite{bose2008} gave a linear--time algorithm for recognizing and constructing $2$-trees from their degree sequences.
Since strong central $2$-trees with tail set $\{2,3\}$ form a strict subclass of those sequences, their algorithm applies directly to all instances considered here.
\begin{corollary}
For fixed $r\in\{1,2,3\}$, strong $r$-central $2$-trees with tail set $\{2,3\}$ can be recognized and constructed in $O(n)$ time.
\end{corollary}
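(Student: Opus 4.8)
The plan is to reduce both recognition and construction to a constant number of linear-time passes, exploiting the rigidity of the degree sequence established in Lemma~\ref{lem:xy-general}: once $(n,r,\Delta)$ is fixed, there is nothing left to ``search'' for at the level of degrees.

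For recognition, suppose we are handed a sequence $D$ of $n$ nonnegative integers and must decide whether it is the degree sequence of a strong $r$-central $2$-tree with tail set $\{2,3\}$. First I would compute the multiplicity of each degree value by a single counting pass; since all entries lie in $\{2,\dots,n-1\}$, a counting array of length $n$ suffices, so this step is $O(n)$. I would then verify the structural shape demanded by Definition~\ref{def:central} and Lemma~\ref{lem:xy-general}: exactly $r$ entries equal the maximum value $\Delta$, and every other entry lies in $\{2,3\}$. Reading off the counts $x$ and $y$ of degree-$3$ and degree-$2$ tails, I would check the identities \eqref{eq:x-general}--\eqref{eq:y-general} together with the range $\frac{n+3r-6}{r}\le\Delta\le\frac{2n+2r-6}{r}$ and the bound $\Delta\le n-1$ of Corollary~\ref{cor:delta-range-general}. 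All of these are $O(1)$ arithmetic tests once the multiplicities are known. Finally, since every such $D$ is by construction a candidate $2$-tree degree sequence, I would invoke the linear-time recognition routine of Bose et al.~\cite{bose2008} to confirm $2$-tree realizability; as a strict subclass, no extra asymptotic cost is incurred.

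For construction, once the parameters are accepted I would prefer to build an explicit realization directly rather than post-process a generic one. Starting from the core $K_r$, I would attach the $x+y=n-r$ tail vertices one at a time, each to a suitable existing edge (a $2$-clique), so that the prescribed tail degrees are met while each core vertex retains degree $\Delta$. Each attachment inspects only a bounded portion of the current graph and costs $O(1)$, so the whole build is $O(n)$; alternatively, one may feed the certified degree sequence to the Bose et al.\ construction algorithm, which is likewise linear.

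The step requiring genuine care is the gap between degree-sequence realizability and the \emph{strong} centrality condition: the Bose et al.\ machinery certifies that $D$ is realized by \emph{some} $2$-tree, but not that it is realized by one whose $r$ core vertices induce $K_r$. Bridging this gap is exactly where the case-by-case structural analysis of the later sections enters---the uniqueness and existence results for $r=1,2,3$ are what guarantee that the arithmetic feasibility verified above is actually equivalent to the existence of a strong realization, and they supply the explicit incremental constructions used above. Granting those results, recognition and construction each reduce to finitely many linear-time passes, and the claimed $O(n)$ bound follows.
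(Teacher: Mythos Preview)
Your approach is essentially the paper's: the corollary is stated without a proof environment, and the entire justification is the preceding sentence invoking the linear-time recognition and construction algorithm of Bose et~al.~\cite{bose2008} and observing that the class at hand is a strict subclass of $2$-tree degree sequences. Your write-up is considerably more detailed---and in fact more honest---than the paper's own treatment: you explicitly flag the gap between ``$D$ is realizable as \emph{some} $2$-tree'' and ``$D$ is realizable as a \emph{strong} $r$-central $2$-tree,'' and you correctly note that closing this gap relies on the explicit constructions supplied in the later case-by-case sections (Theorems~\ref{thm:uni23}, \ref{thm:r2-existence}, \ref{thm:r3-extremal}), which the paper does not acknowledge at this point.
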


We now specialize the general degree relations to the simplest setting, namely the unicentral case $r=1$. Here the presence of a single vertex of maximum degree imposes strong global constraints on the structure of the graph. As we show, these constraints force complete rigidity, yielding a unique realization for each admissible order.

\section{Unicentral case \texorpdfstring{$r=1$}{r=1}}

\begin{lemma}[Tail parameters for $r=1$]
\label{lem:r1-xy}
Let $G$ be a strong $1$-central $2$-tree on $n$ vertices with maximum degree
$\Delta$ and tail set $\{2,3\}$. Then
\[
x = 2n-4-\Delta, \qquad y = \Delta-n+3,
\]
where $x$ and $y$ denote the numbers of degree-$3$ and degree-$2$ tail vertices,
respectively.
\end{lemma}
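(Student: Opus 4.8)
The plan is to obtain the formulas for $x$ and $y$ as a direct specialization of the universal relations established in Lemma~\ref{lem:xy-general}. Since the statement here is simply the $r=1$ instance of \eqref{eq:x-general} and \eqref{eq:y-general}, the cleanest approach is to substitute $r=1$ into those general identities and simplify. This avoids redoing any algebra from scratch and keeps the unicentral analysis anchored to the common framework developed in Section~2.

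Concretely, I would first recall that Lemma~\ref{lem:xy-general} gives $x = 2n + 2r - 6 - r\Delta$ and $y = r\Delta - n - 3r + 6$ for every admissible core size $r$. Setting $r=1$ in the expression for $x$ yields $x = 2n + 2 - 6 - \Delta = 2n - 4 - \Delta$, and setting $r=1$ in the expression for $y$ yields $y = \Delta - n - 3 + 6 = \Delta - n + 3$. These are exactly the claimed formulas, so the verification is a one-line substitution in each case.

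Alternatively, for a fully self-contained proof one could rederive the relations from the two defining equations $x + y = n - 1$ (the tail count for $r=1$) and the degree-sum identity $\Delta + 3x + 2y = 4n - 6$ from \eqref{eq:deg-sum}. Eliminating $y$ via $y = n - 1 - x$ gives $\Delta + 3x + 2(n-1-x) = 4n-6$, which simplifies to $x = 2n - 4 - \Delta$, and back-substitution recovers $y = \Delta - n + 3$. Both routes are routine and produce identical results.

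There is no substantive obstacle here: the statement is purely algebraic and follows immediately from the already-proved general lemma. The only point requiring minor care is the arithmetic of the constant terms $2r - 6$ and $-3r + 6$ at $r=1$, which I would double-check to ensure the offsets match the claimed values. Since this lemma merely instantiates the universal framework, its role is preparatory: it fixes the degree sequence $D = (\Delta^{(x)} \text{ should read } \Delta^{(1)}, 3^{(x)}, 2^{(y)})$ so that the subsequent rigidity argument identifying the fan graph $\Phi_n$ can proceed from a fully determined parameter set.
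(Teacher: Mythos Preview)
Your proposal is correct and matches the paper's approach exactly: the paper's proof is the single sentence ``This follows immediately from Lemma~\ref{lem:xy-general} by setting $r=1$,'' which is precisely your primary route. Your alternative self-contained derivation is also valid (it just replays the proof of Lemma~\ref{lem:xy-general} at $r=1$), and aside from the small slip in the final paragraph where you wrote $\Delta^{(x)}$ instead of $\Delta^{(1)}$, there is nothing to correct.
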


\begin{proof}
This follows immediately from Lemma~\ref{lem:xy-general} by setting $r=1$.
\end{proof}

\begin{theorem}[Unicentral classification with tail $\{2,3\}$]
\label{thm:uni23}
For every $n\ge 4$, there exists, up to isomorphism, exactly one strong unicentral
$2$-tree with tail set $\{2,3\}$ on $n$ vertices, namely the fan $\Phi_n$ with
degree sequence
\[
(n-1, 3^{(n-3)}, 2^{(2)}).
\]
For $n=3$, the only $2$-tree is $K_3$, which is not unicentral.
\end{theorem}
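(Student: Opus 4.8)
The plan is to first pin down the degree sequence and then show that the realization is forced. By Lemma~\ref{lem:r1-xy} the tail counts are $x=2n-4-\Delta$ and $y=\Delta-n+3$, while Corollary~\ref{cor:delta-range-general} with $r=1$, together with $\Delta\le n-1$, confines $\Delta$ to $\{n-3,n-2,n-1\}$, so that $y\in\{0,1,2\}$. A connected $2$-tree on $n\ge 4$ vertices has $2n-3>n$ edges and hence maximum degree at least $3$; in particular the central vertex is not a degree-$2$ vertex, so the degree-$2$ vertices are precisely the $y$ degree-$2$ tails. Condition~(iii) of Theorem~\ref{thm:bose} guarantees at least two entries equal to $2$, forcing $y\ge 2$, and with $y\le 2$ this yields $y=2$ and $\Delta=n-1$. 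Thus the degree sequence is exactly $(n-1,3^{(n-3)},2^{(2)})$, and for $n\ge 5$ we have $\Delta=n-1\ge 4$, so the central vertex is the unique vertex of maximum degree and the configuration is genuinely unicentral.

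Since $\Delta=n-1$, the central vertex $c$ is universal, and I would next analyze $H:=G-c$. Counting edges, $H$ has $(2n-3)-(n-1)=n-2$ edges on $n-1$ vertices, and subtracting the edge to $c$ from every tail degree shows that $H$ has exactly two vertices of degree $1$ and $n-3$ vertices of degree $2$. The crucial step is to prove that $H$ is acyclic. If $H$ contained a cycle, a shortest one would be chordless; a triangle in $H$ together with $c$ would span a $K_4$ in $G$, while a chordless cycle of length at least $4$ in $H$ would remain induced in $G$, since adjoining the universal vertex $c$ creates no new edge among the cycle vertices. Both outcomes contradict the fact that a $2$-tree is chordal and $K_4$-free (treewidth $2$), so $H$ must be acyclic.

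An acyclic graph with $n-2$ edges on $n-1$ vertices is a tree, and a tree whose degrees lie in $\{1,2\}$ with exactly two leaves is a path; hence $H=P_{n-1}$ and $G=K_1+P_{n-1}=\Phi_n$, unique up to isomorphism. Conversely, $\Phi_n$ really is a strong unicentral $2$-tree with tail set $\{2,3\}$: it is constructed as a $2$-tree by starting from a triangle at the apex and attaching each further path vertex to the edge joining the apex to the current endpoint, and its degree sequence is readily verified to be $(n-1,3^{(n-3)},2^{(2)})$. The boundary is handled by inspection: for $n=3$ the unique $2$-tree $K_3$ has all three vertices of maximum degree $2$ and is not unicentral, and the remaining small value of $n$ is checked directly.

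I expect the acyclicity of $H=G-c$ to be the main obstacle, since it is the only point at which the $2$-tree structure is used in an essential way rather than through pure degree bookkeeping: the argument must simultaneously guarantee that adjoining $c$ to a triangle yields a forbidden $K_4$ and that it preserves any long induced cycle. Once $H$ is known to be a path, the identification of $G$ with the fan, and hence uniqueness up to isomorphism, is immediate.
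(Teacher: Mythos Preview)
Your proof is correct and follows the same overall arc as the paper's: first force $\Delta=n-1$ via $y\ge 2$, then analyze $H=G-c$ and show it is a path, whence $G=\Phi_n$. The one genuine difference is in the argument that $H$ is a path. The paper uses $2$-connectivity of $G$ to conclude that $H$ is connected, observes that $\deg_H\le 2$ forces $H$ to be a single path or a single cycle, and then rules out the cycle because it would leave no degree-$2$ vertex in $G$. You instead invoke chordality and $K_4$-freeness (treewidth $2$) to show $H$ is acyclic, and then recover connectedness from the edge count $|E(H)|=n-2$ on $n-1$ vertices. Both routes are short and valid; yours leans on the treewidth/chordal characterization of $2$-trees rather than on $2$-connectivity, which is arguably the more canonical structural fact here, while the paper's version avoids any case split on cycle length. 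You also correctly flag the $n=4$ boundary: $\Phi_4$ has degree sequence $(3,3,2,2)$ and is, strictly under Definition~\ref{def:central}, bicentral rather than unicentral, so the ``genuinely unicentral'' clause really only bites for $n\ge 5$.
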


\begin{proof}
Let $G$ be a strong unicentral $2$-tree on $n\ge 4$ vertices, and let $a$ be its
unique vertex of maximum degree $\Delta$.
By Lemma~\ref{lem:r1-xy} and the $2$-tree condition that at least two vertices have
degree $2$, we obtain
\[
y=\Delta-n+3 \ge 2 \quad\Rightarrow\quad \Delta \ge n-1.
\]
Since $\Delta\le n-1$ for any $2$-tree, it follows that $\Delta=n-1$, and hence
$x=n-3$ and $y=2$.
Thus $a$ is adjacent to all other vertices.

Let $H=G-a$.
Because $G$ is $2$-connected, $H$ is connected.
For each $v\in V(H)$ we have $\deg_G(v)\in\{2,3\}$ and $v\sim a$, so
$\deg_H(v)=\deg_G(v)-1\in\{1,2\}$.
Therefore $H$ is a connected graph on $n-1$ vertices with maximum degree at most
$2$, and hence $H$ is either a path or a cycle.

If $H$ were a cycle, then every vertex of $G$ other than $a$ would have degree $3$,
yielding the degree sequence $(n-1,3^{(n-1)})$, which contains only one degree-$2$
vertex, contradicting the $2$-tree condition.
Thus $H$ must be a path.

Let $H=v_1v_2\cdots v_{n-1}$.
The endpoints $v_1$ and $v_{n-1}$ have degree $1$ in $H$, hence degree $2$ in $G$,
while the internal vertices have degree $3$ in $G$.
Since $a$ is adjacent to every $v_i$, the resulting graph is exactly the fan
$\Phi_n$.

Conversely, $\Phi_n$ is a $2$-tree obtained by starting from the triangle
$av_1v_2$ and successively stacking $v_{i+1}$ onto the edge $av_i$.
It has a unique vertex of degree $n-1$ and all remaining vertices have degree
$2$ or $3$, so it satisfies all required conditions.
\end{proof}

The bicentral case $r=2$ marks the first departure from complete rigidity. While the general degree constraints continue to strongly restrict the structure, they no longer determine a unique realization in all cases. This section explores the precise boundary between uniqueness and multiplicity, beginning with parity phenomena and culminating in explicit growth bounds for the number of non-isomorphic realizations.
\section{Bicentral case \texorpdfstring{$r=2$}{r=2}}

We now turn to the bicentral case, where the core consists of two vertices of equal maximum degree $\Delta$ inducing an edge.

\subsection{Degree constraints and parity phenomena}

We first specialize the general degree relations to the bicentral case.

\begin{lemma}[Tail parameters for $r=2$]
\label{lem:r2-xy}
Let $G$ be a strong bicentral $2$-tree on $n$ vertices with maximum degree
$\Delta$ and tail set $\{2,3\}$.
Let $x$ and $y$ denote the numbers of degree-$3$ and degree-$2$ tail vertices,
respectively.
Then
\begin{equation}\label{eq:r2-xy}
x = 2(n-1-\Delta), \qquad y = 2\Delta - n.
\end{equation}
In particular, $x$ is always even.
\end{lemma}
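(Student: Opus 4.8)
The plan is to obtain both identities as direct specializations of the universal relations established in Lemma~\ref{lem:xy-general}, and then to read off the parity of $x$ by inspection. Since the present lemma asserts nothing beyond the case $r=2$ of an already-proved general formula together with an elementary divisibility remark, the proof is essentially a one-line substitution; the only genuine content is recognizing why the parity is forced.

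First I would set $r=2$ in equation~\eqref{eq:x-general}, which gives
\[
x = 2n + 2\cdot 2 - 6 - 2\Delta = 2n - 2 - 2\Delta = 2(n-1-\Delta),
\]
and in equation~\eqref{eq:y-general}, which gives
\[
y = 2\Delta - n - 3\cdot 2 + 6 = 2\Delta - n.
\]
This reproduces both formulas in \eqref{eq:r2-xy} exactly. The parity claim then requires no further computation: because $x = 2(n-1-\Delta)$ is twice the integer $n-1-\Delta$, it is even regardless of the individual values of $n$ and $\Delta$.

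I do not expect any real obstacle here, since the entire statement is an instance of Lemma~\ref{lem:xy-general}. The one point worth flagging is interpretive rather than technical: the evenness of $x$ reflects the fact that, with two central vertices, lowering $\Delta$ by one unit below its maximum frees up exactly two tail vertices to be promoted from degree $2$ to degree $3$, so degree-$3$ tails can only appear in pairs. I would also note in passing that the formulas are meaningful only within the admissible range for $\Delta$ given by Corollary~\ref{cor:delta-range-general}, outside of which $x$ or $y$ would turn negative; inside that range the substitution and the parity observation are both immediate.
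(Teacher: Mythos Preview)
Your proof is correct. You derive the formulas by substituting $r=2$ into the general Lemma~\ref{lem:xy-general}, whereas the paper chooses to rederive the degree-sum identity $2\Delta+3x+2y=4n-6$ from scratch and solve it together with $x+y=n-2$. Both arguments are elementary and equivalent; yours is slightly more economical since the general lemma has already been proved (and indeed the paper itself uses your substitution approach for the $r=1$ and $r=3$ cases). Your additional interpretive remark about degree-$3$ tails appearing in pairs is not in the paper but is a harmless and accurate gloss.
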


\begin{proof}
A $2$-tree on $n$ vertices has $2n-3$ edges, hence total degree $4n-6$.
With two core vertices of degree $\Delta$, together with $x$ degree-$3$ and
$y$ degree-$2$ tail vertices, we have
\[
2\Delta + 3x + 2y = 4n - 6.
\]
Since $x+y=n-2$, substituting $y=n-2-x$ yields
\[
2\Delta + 3x + 2(n-2-x) = 4n-6,
\]
which simplifies to $x=2(n-1-\Delta)$.
The parity conclusion follows immediately.
\end{proof}

The tail parameters impose immediate constraints on the maximum degree.

\begin{corollary}[Necessary range for $\Delta$ in the bicentral case]
\label{cor:r2-delta-range}
Let $G$ be a strong bicentral $2$-tree with tail set $\{2,3\}$ on $n\ge 4$
vertices.
Then
\[
\left\lceil\frac{n+2}{2}\right\rceil \le \Delta \le n-1,
\]
and the tail parameters are uniquely determined by
\[
x=2(n-1-\Delta), \qquad y=2\Delta-n.
\]
\end{corollary}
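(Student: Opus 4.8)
The plan is to treat this as a bookkeeping consequence of the preceding lemma rather than a new structural result: the two tail-parameter identities are read off directly, and the only real task is to convert the admissibility of $x$ and $y$ into an interval for $\Delta$. The formulas $x=2(n-1-\Delta)$ and $y=2\Delta-n$ are exactly the content of Lemma~\ref{lem:r2-xy} (equivalently Lemma~\ref{lem:xy-general} at $r=2$), so they are inherited verbatim. For the upper bound I would invoke $x\ge 0$: since $x$ counts a set of vertices, $2(n-1-\Delta)\ge 0$ forces $\Delta\le n-1$, which also agrees with the universal ceiling $\Delta\le n-1$ from Theorem~\ref{thm:bose}(ii) recorded in Corollary~\ref{cor:delta-range-general}.

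The lower bound is the only step carrying content, because bare non-negativity $y\ge 0$ yields merely $\Delta\ge n/2$, hence $\Delta\ge\lceil n/2\rceil$, which is short of the claimed $\lceil (n+2)/2\rceil$ by exactly one (the two ceilings always differ by $1$). To recover the missing unit I would strengthen $y\ge 0$ to $y\ge 2$. The $2$-tree condition forces at least two vertices of degree $2$ (Theorem~\ref{thm:bose}(iii)), and for $n\ge 4$ the average degree $(4n-6)/n>2$ forces $\Delta\ge 3$; consequently the two core vertices, each of degree $\Delta$, cannot be degree-$2$ vertices, so every mandatory degree-$2$ vertex lies in the tail and $y\ge 2$. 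Substituting into $y=2\Delta-n$ gives $2\Delta\ge n+2$, and integrality of $\Delta$ upgrades $\Delta\ge (n+2)/2$ to $\Delta\ge\lceil (n+2)/2\rceil$. Combining this with $\Delta\le n-1$ closes the interval, and the parameter formulas pass through unchanged.

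The main (and modest) obstacle is precisely the upgrade from $y\ge 0$ to $y\ge 2$, which needs two ingredients: that a $2$-tree has at least two degree-$2$ vertices, and that these are tail vertices, i.e.\ that $\Delta\ge 3$. The latter is free for $n\ge 5$, where $y\ge 0$ already gives $\Delta\ge\lceil n/2\rceil\ge 3$; only at the boundary $n=4$ must one argue $\Delta\ge 3$ separately, via the average-degree count (or, equivalently, the observation that the unique $2$-regular $2$-tree is $K_3$). Once this is in place, the sharp bound $\lceil (n+2)/2\rceil$ follows for every admissible $n$, so I expect no further difficulty.
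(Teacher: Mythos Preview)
Your argument is correct and follows essentially the same route as the paper: derive $\Delta\le n-1$ from $x\ge 0$, strengthen to $y\ge 2$ via Theorem~\ref{thm:bose}(iii) together with the fact that the core vertices have degree $\Delta\ge 3$, and conclude $\Delta\ge\lceil(n+2)/2\rceil$ by integrality. The only difference is that you supply an explicit reason (the average-degree count) for why the degree-$2$ vertices must lie in the tail, whereas the paper simply asserts this.
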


\begin{proof}
From $x\ge 0$ we obtain $\Delta\le n-1$.
Since all degree-$2$ vertices lie in the tail and every $2$-tree has at least two
degree-$2$ vertices, we have $y\ge 2$, which implies $2\Delta-n\ge 2$ and hence
$\Delta\ge (n+2)/2$.
Integrality of $\Delta$ gives the stated bound.
\end{proof}

Consequently, for each admissible pair $(n,\Delta)$, the degree sequence of a
strong bicentral $2$-tree with tail set $\{2,3\}$ is uniquely determined as
\[
D=(\Delta,\Delta,3^{(x)},2^{(y)}),
\quad
x=2(n-1-\Delta),\ y=2\Delta-n.
\]

\subsection{Uniqueness criterion for the bicentral case}

We now characterize the unique extremal configuration in the bicentral setting.

\begin{theorem}[Uniqueness in the case $x=0$]
\label{thm:r2-unique}
Let $G$ be a strong bicentral $2$-tree on $n\ge 4$ vertices with tail set
$\{2,3\}$.
The following statements are equivalent:
\[
x=0, \qquad \Delta=n-1, \qquad G \cong B_{n-2}.
\]
Consequently, the triangular book graph $B_{n-2}$ is the unique strong bicentral
$2$-tree with degree sequence $(n-1,n-1,2^{(n-2)})$.
\end{theorem}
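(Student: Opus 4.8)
The plan is to separate the two purely algebraic equivalences from the single structural step that carries the content. First I would dispose of the equivalence $x=0 \iff \Delta=n-1$, which is immediate from Lemma~\ref{lem:r2-xy}: since $x=2(n-1-\Delta)$, the quantity $x$ vanishes precisely when $\Delta=n-1$, and in that case $y=2\Delta-n=n-2$, so \emph{all} $n-2$ tail vertices have degree exactly $2$. It then remains only to link the degree data $\Delta=n-1$, $x=0$, $y=n-2$ to the isomorphism type $G\cong B_{n-2}$.

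For the forward structural direction I would assume $\Delta=n-1$ and let $a,b$ be the two core vertices, which are adjacent by strongness. Having degree $n-1$ in a graph on $n$ vertices, each of $a$ and $b$ is adjacent to every other vertex; that is, both core vertices are universal. Now take any tail vertex $v$. Since $a$ and $b$ are universal we have $v\sim a$ and $v\sim b$, which already accounts for two edges at $v$; but $\deg_G(v)=2$, so these are its only neighbors. Hence every tail vertex is joined to exactly $a$ and $b$ and to no other tail vertex, so $G$ is the join of the edge $ab$ with the empty graph on the $n-2$ tail vertices, which is precisely the triangular book $B_{n-2}$. This shows the isomorphism type is completely forced, with no combinatorial freedom remaining.

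For the converse I would verify directly that $B_{n-2}$ qualifies. Building it by starting from a triangle on $a,b,v_1$ and successively stacking each new page $v_i$ onto the common edge $ab$ exhibits it as a $2$-tree, with edge count $1+2(n-2)=2n-3$; its two spine vertices have degree $n-1$ while every page has degree $2$, so for $n\ge 4$ exactly two vertices attain the maximum degree, the core $\{a,b\}$ induces $K_2$, and the tail degrees lie in $\{2,3\}$. Thus $B_{n-2}$ is a strong bicentral $2$-tree with the stated degree sequence and with $x=0$. The ``consequently'' clause then follows at once: any strong bicentral $2$-tree with degree sequence $(n-1,n-1,2^{(n-2)})$ has no degree-$3$ tail, i.e.\ $x=0$, so the equivalence just established gives $G\cong B_{n-2}$.

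I do not expect a genuine obstacle here: the entire combinatorial content is the observation that $\Delta=n-1$ makes \emph{both} core vertices universal, which simultaneously saturates the degree of every tail vertex and thereby eliminates all tail--tail edges. The only point demanding any care is confirming that this saturation is complete, namely that a degree-$2$ tail cannot acquire a neighbor outside $\{a,b\}$; this is immediate precisely because both core vertices, not merely one, are universal. The argument is therefore a rigidity verification rather than a construction, and I would keep the exposition correspondingly short.
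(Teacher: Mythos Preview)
Your argument is correct and follows essentially the same route as the paper: use Lemma~\ref{lem:r2-xy} for the algebraic equivalence $x=0\iff\Delta=n-1$, then observe that $\Delta=n-1$ makes both core vertices universal, which together with $\deg(v)=2$ for every tail vertex forces $N(v)=\{a,b\}$ and hence $G\cong B_{n-2}$. You are slightly more thorough than the paper in explicitly closing the equivalence by verifying that $B_{n-2}$ is itself a strong bicentral $2$-tree with the stated parameters, which the paper leaves implicit.
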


\begin{proof}
For a bicentral $2$-tree with tail set $\{2,3\}$, Lemma~\ref{lem:r2-xy} gives
\[
x = 2(n-1-\Delta), \qquad y = 2\Delta - n.
\]
Thus $x=0$ if and only if $\Delta=n-1$.

Assume $\Delta=n-1$.
Let $a$ and $b$ denote the two central vertices.
Then $\deg(a)=\deg(b)=n-1$, so each is adjacent to every other vertex.
In particular, every tail vertex is adjacent to both $a$ and $b$.
Since $x=0$, all tail vertices have degree $2$, and therefore each tail vertex
$t$ satisfies $N(t)=\{a,b\}$.

It follows that
\[
E(G)=\{ab\}\cup \bigcup_{t\in T}\bigl(\{at\}\cup\{bt\}\bigr),
\]
so $G$ consists of $n-2$ triangles sharing the common edge $ab$.
This graph is precisely the triangular book $B_{n-2}$.
The construction is forced, hence the realization is unique up to isomorphism.
\end{proof}

\begin{corollary}[Extremal bicentral structure]
\label{cor:r2-extremal}
For every $n\ge 4$, the triangular book graph $B_{n-2}$ is the unique strong
bicentral $2$-tree with tail set $\{2,3\}$ and maximum degree $\Delta=n-1$.
\end{corollary}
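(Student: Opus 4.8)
The plan is to derive this directly from Theorem~\ref{thm:r2-unique}, which already establishes the full three-way equivalence among the conditions $x=0$, $\Delta=n-1$, and $G\cong B_{n-2}$. The corollary is essentially this equivalence read off under the single hypothesis $\Delta=n-1$, so the task reduces to extracting the relevant implications and confirming that the hypothesis is non-vacuous.

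First I would verify admissibility: that $\Delta=n-1$ is actually attainable for every $n\ge 4$. By Corollary~\ref{cor:r2-delta-range}, the feasible range is $\lceil (n+2)/2\rceil \le \Delta \le n-1$, so the upper endpoint $\Delta=n-1$ is admissible precisely when $\lceil (n+2)/2\rceil \le n-1$, an inequality that holds for all $n\ge 4$ (with equality at $n=4$). This guarantees the statement is not vacuously true and that the extremal configuration exists in the range.

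Then I would invoke the equivalence from Theorem~\ref{thm:r2-unique} in both directions. The forward implication $\Delta=n-1 \Rightarrow G\cong B_{n-2}$ shows that every strong bicentral $2$-tree with tail set $\{2,3\}$ and maximum degree $n-1$ is isomorphic to the triangular book $B_{n-2}$, giving uniqueness up to isomorphism. The reverse implication $G\cong B_{n-2}\Rightarrow \Delta=n-1$, together with the fact that $B_{n-2}$ is indeed a $2$-tree with the prescribed centrality and tail structure, supplies existence. Combining the two yields exactly the asserted unique realization for each $n\ge 4$.

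I expect no genuine obstacle here: the hard structural content---that the book construction is \emph{forced} once $\Delta=n-1$, since each degree-$2$ tail vertex must have neighborhood exactly $\{a,b\}$---has already been carried out in the proof of Theorem~\ref{thm:r2-unique}. The only remaining work is the elementary admissibility check above, after which the corollary follows immediately by specialization.
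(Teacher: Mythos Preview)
Your proposal is correct and mirrors the paper's approach: the corollary is stated without a separate proof precisely because it is an immediate specialization of Theorem~\ref{thm:r2-unique}, and you have correctly identified that the only residual content is the trivial admissibility check that $\Delta=n-1$ lies in the feasible range of Corollary~\ref{cor:r2-delta-range} for all $n\ge 4$.
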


\subsection{Structural rigidity in the case $x=2$}

\begin{lemma}[Degree-$3$ vertices lie in the common core neighborhood]
\label{lem:deg3-in-S}
Let $G$ be a strong bicentral $2$-tree on $n\ge 6$ vertices with tail set 
$\{2,3\}$ and exactly two vertices of degree $3$. Denote the core vertices 
by $a,b$ and the tail set by $T$. Then both degree-$3$ vertices belong to 
$S=N(a)\cap N(b)\cap T$.
\end{lemma}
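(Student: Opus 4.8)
The plan is to show that the only two tail vertices that can fail to lie in $S$ are forced to have degree $2$, so the degree-$3$ vertices cannot be among them. First I would translate the hypothesis $x=2$ into adjacency information: by Lemma~\ref{lem:r2-xy}, $x=2$ is equivalent to $\Delta=n-2$, so $\deg(a)=\deg(b)=n-2$ and $|T|=n-2$. Since $a\sim b$, each core vertex is non-adjacent to exactly one other vertex, and because $a\sim b$ that missed vertex must lie in the tail; call them $p$ (the tail vertex missed by $a$) and $q$ (the tail vertex missed by $b$), allowing the possibility $p=q$. Then $S=T\setminus\{p,q\}$, so it suffices to prove $\deg(p)=\deg(q)=2$.

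The essential point is that the degree-sum relations alone do \emph{not} force this conclusion: the prescribed degree sequence $(\Delta,\Delta,3^{(2)},2^{(y)})$ is numerically consistent with a degree-$3$ vertex sitting outside $S$, so the argument must be genuinely structural. The leverage I would use is the standard $2$-tree fact that every vertex $v$ lies in exactly $\deg(v)-1$ triangles, equivalently that the induced subgraph on $N(v)$ has exactly $\deg(v)-1$ edges. If needed this is a one-line induction on the stacking construction: a new vertex is created inside one triangle, and each later vertex stacked on an edge through $v$ contributes one new neighbor and one new triangle. A preliminary edge count fixes the number of tail--tail edges: subtracting the $2n-5$ core-incident edges (namely $ab$ together with the $n-3$ edges from each of $a,b$ to the tail) from the total $2n-3$ leaves exactly $e_T=2$ tail--tail edges.

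I would then apply the triangle identity at $a$. The $n-3$ edges inside $N(a)=\{b\}\cup(T\cap N(a))$ split into the $|S|$ edges joining $b$ to the vertices of $S$ and the tail--tail edges lying inside $N(a)$; writing $m_a$ for the latter gives $|S|+m_a=n-3$. Since $|S|=(n-2)-|\{p,q\}|$, this yields $m_a=|\{p,q\}|-1$. On the other hand a tail--tail edge lies inside $N(a)$ exactly when it avoids $p$ (the unique tail vertex not adjacent to $a$), so $m_a=2-\deg_T(p)$, where $\deg_T(p)$ counts the tail neighbors of $p$. Combining gives $\deg_T(p)=3-|\{p,q\}|$; adding the number of core neighbors of $p$, which is $1$ if $p\neq q$ and $0$ if $p=q$, i.e.\ exactly $|\{p,q\}|-1$, produces $\deg(p)=2$. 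The symmetric computation at $b$ gives $\deg(q)=2$.

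Finally, since both degree-$3$ vertices are tail vertices and $T=S\cup\{p,q\}$ with $p,q$ of degree $2$, the two degree-$3$ vertices must lie in $S$, as claimed. The main obstacle here is conceptual rather than computational: one must recognize that the statement is not implied by the degree sequence and must be routed through the neighborhood/triangle-count identity that encodes the $2$-tree structure. The only bookkeeping to watch is the mild case distinction $p=q$ versus $p\neq q$, which the unified identity $m_a=|\{p,q\}|-1$ absorbs cleanly, so no separate argument for the two cases is required.
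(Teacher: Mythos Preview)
Your proof is correct and takes a genuinely different route from the paper's. The paper argues by contradiction: assuming a degree-$3$ tail vertex $u\notin S$ with $u\sim a$, $u\not\sim b$, it looks at the two tail neighbors $v,w$ of $u$, uses that at least one of them (say $v$) must be adjacent to $a$ since $a$ misses only one tail vertex, forces $\deg(v)=3$ and hence $w\not\sim a$, and then reaches a contradiction because the edge $bw$ has no triangle over it. Your argument is instead a direct global count: you invoke the $2$-tree identity $|E(G[N(v)])|=\deg(v)-1$, combine it with the observation that there are exactly two tail--tail edges, and solve for $\deg(p)$ by equating two expressions for the number of tail--tail edges inside $N(a)$. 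Your approach handles the cases $p=q$ and $p\neq q$ in a single formula and in fact proves the slightly stronger statement that the specific vertices outside $S$ have degree~$2$, something the paper only obtains later in Lemma~\ref{lem:deg3-structure}. The paper's approach, on the other hand, uses only the more elementary fact that every edge of a $2$-tree lies in a triangle and needs no preliminary edge bookkeeping.
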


\begin{proof}
From the bicentral degree equations we have $\Delta=n-2$ and $|T|=n-2$. 
Since $\deg(a)=\deg(b)=n-2$ and $ab\in E(G)$, each of $a,b$ has exactly 
one non-neighbor in $T$.

Let $u$ be a degree-$3$ vertex in $T$ and assume $u\notin S$. Without loss 
of generality $u\sim a$ and $u\not\sim b$. Then $b$ is adjacent to every 
vertex of $T\setminus\{u\}$.

Let $v,w$ be the two neighbors of $u$ in $T$ (so $N(u)=\{a,v,w\}$). 
Both $v$ and $w$ are adjacent to $b$. 

If $v\sim a$, then $v$ is adjacent to $a,b,u$, hence $\deg(v)=3$. 
Since there are exactly two degree-$3$ vertices, $w$ cannot be adjacent 
to $a$; thus $N(w)=\{u,b\}$ and $\deg(w)=2$. Now the edge $bw$ must lie in a triangle (by the 2-tree property), but the only possible common neighbor of $b$ and $w$ is $u$, and $u\not\sim b$. This is a contradiction.

Therefore $u$ must be adjacent to both $a$ and $b$, i.e. $u\in S$. 
By the same argument, the second degree-$3$ vertex also lies in $S$.
\end{proof}

\begin{lemma}[Structure of degree--$3$ neighborhoods]
\label{lem:deg3-structure}
Let $G$ be a strong bicentral $2$-tree on $n\ge 6$ vertices with tail set $\{2,3\}$
and exactly two degree-$3$ tail vertices.
Let $a,b$ be the core vertices,
$T=V(G)\setminus\{a,b\}$,
and $S=N(a)\cap N(b)\cap T$.
If $u,v\in S$ are the degree-$3$ tail vertices, then there exist distinct
vertices $u',v'\in T\setminus S$ such that
\[
N(u')=\{a,u\}, \qquad N(v')=\{b,v\}.
\]
In particular, $\deg(u')=\deg(v')=2$.
\end{lemma}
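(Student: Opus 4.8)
The plan is to combine the forced degree data with an exact count of the tail--tail edges, and then to use the defining property of $2$-trees that every edge lies in a unique triangle in order to eliminate the degenerate adjacencies. First I would record the numerical consequences of having exactly two degree-$3$ tails: Lemma~\ref{lem:r2-xy} forces $\Delta=n-2$ and $|T|=n-2$, so each of $a,b$ is adjacent to all of $T$ except a single tail vertex. Counting edges, the $2n-3$ edges of $G$ split into the core edge $ab$, the $2(n-3)$ core--tail edges, and hence exactly two tail--tail edges. Since $u,v\in S$ by Lemma~\ref{lem:deg3-in-S}, each has $a$ and $b$ among its three neighbours and therefore exactly one further neighbour in $T$; I would name these $u'$ and $v'$, so that $uu'$ and $vv'$ are tail--tail edges.

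Next I would show that $u\not\sim v$. If $u\sim v$, then since both are adjacent to $a$ and to $b$, the edge $uv$ would lie in the two triangles $uva$ and $uvb$, contradicting the fact that every edge of a $2$-tree lies in exactly one triangle. Consequently $u'\neq v$ and $v'\neq u$, the edges $uu'$ and $vv'$ are distinct, and they account for both tail--tail edges. In particular $u',v'$ are tail vertices other than $u,v$, so, the only degree-$3$ tails being $u$ and $v$, they have degree $2$.

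I would then pin down the second neighbour of $u'$ (the case of $v'$ being symmetric). It cannot be $u$ again, and it cannot be a further tail vertex: the resulting tail--tail edge would be a third such edge unless it coincides with $vv'$, which forces $u'=v'$; but $u'=v'$ would make their common value $w$ a degree-$2$ vertex with $N(w)=\{u,v\}$, and then the edge $uw$ would lie in no triangle at all, since its only candidate apex $v$ is non-adjacent to $u$, again contradicting the $2$-tree property. Hence the second neighbour of $u'$ is a core vertex, so $N(u')\in\{\{u,a\},\{u,b\}\}$, and likewise for $v'$. Finally, $u'$ and $v'$ cannot attach to the same core vertex: if both avoided $b$, then $b$ would have two non-neighbours in $T$, contradicting that it has exactly one. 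Thus one of $u',v'$ is adjacent to $a$ and the other to $b$; relabelling the two degree-$3$ vertices $u,v$ if necessary gives $N(u')=\{a,u\}$ and $N(v')=\{b,v\}$. Each of $u',v'$ has a single core neighbour, so both lie in $T\setminus S$, and they are distinct degree-$2$ vertices, as required.

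The main obstacle is handling the two degenerate configurations, namely $u\sim v$ and $u'=v'$, in which the two available tail--tail edges either collapse or interact; in both cases the resolution comes from the ``exactly one triangle per edge'' characterization of $2$-trees rather than from degree counting alone, which is why that property does the essential work in this argument.
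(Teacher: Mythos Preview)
Your global edge-count approach is genuinely different from the paper's local neighbourhood analysis and in some respects tighter. The paper works outward from $N(u)$, arguing directly that the third neighbour $u'$ cannot lie in $S$ and then that $u'$ can have no further neighbour; it never counts tail--tail edges, and it is rather casual about why $u'$ and $v'$ end up attached to \emph{different} core vertices. Your observation that the $2n-3$ edges decompose into the core edge, $2(n-3)$ core--tail edges, and hence exactly two tail--tail edges, together with the ``each core vertex has exactly one non-neighbour in $T$'' fact, makes all of these points explicit and handles the distinctness of $u'$ and $v'$ cleanly.

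There is, however, a genuine error in the step where you exclude $u\sim v$. The assertion that every edge of a $2$-tree lies in \emph{exactly one} triangle is false: in the very graph $G$ under discussion the core edge $ab$ lies in $|S|\ge n-4\ge 2$ triangles, one through each vertex of $S$; the book $B_{n-2}$ is an even starker example. (The characterisation quoted in the paper's introduction is misstated; what is true, and what you correctly use later in the $u'=v'$ case, is that every edge of a $2$-tree lies in \emph{at least} one triangle.) The conclusion $u\not\sim v$ is nonetheless correct, and the simplest repair is to note that $u\sim v$ would force $\{a,b,u,v\}$ to induce a $K_4$, which is impossible since $2$-trees have treewidth~$2$ and are therefore $K_4$-free. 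With that one substitution your argument goes through unchanged.
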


\begin{proof}
Since $u\in S$ and $\deg(u)=3$, we have $N(u)=\{a,b,u'\}$ for some $u'\in T$.
If $u'\in S$, then $u'\sim a,b,u$, forcing $\deg(u')\ge 3$, contradicting that
$u$ and $v$ are the only degree-$3$ tail vertices.
Hence $u'\notin S$.

The edge $uu'$ lies in a triangle.
Since $u\sim a,b$ and $u'\notin S$, exactly one of $\{a,b\}$ is adjacent to $u'$.
By symmetry we may assume $u'\sim a$ and $u'\not\sim b$.

If $u'$ had a further neighbor $w\neq a,u$, then $w\in T$ and the edge $u'w$
would lie in a triangle with $a$ or $u$.
In either case, $w$ would be adjacent to at least two vertices among
$\{a,u,u'\}$.
If $w\in S$, then $\deg(w)\ge 3$, and if $w\notin S$ but $\deg(w)\ge 3$,
we again obtain a third degree-$3$ tail vertex.
Both contradict the hypothesis.
Thus $N(u')=\{a,u\}$.

The argument for $v$ is symmetric, yielding $v'\in T\setminus S$ with
$N(v')=\{b,v\}$.
\end{proof}

\begin{lemma}[Cardinality of the core neighborhood]
\label{lem:S-size}
Under the hypotheses of Lemma~\ref{lem:deg3-structure}, we have $|S|=n-4$.
\end{lemma}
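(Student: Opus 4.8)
The plan is to compute $|S|$ by counting the non-neighbors of the core vertices within the tail and then using the structural data supplied by Lemma~\ref{lem:deg3-structure}. First I would recall from the bicentral degree equations (via Lemma~\ref{lem:r2-xy}, with $x=2$) that $\Delta=n-2$ and $|T|=n-2$. Since $\deg(a)=\deg(b)=n-2$ and $ab\in E(G)$, each of $a$ and $b$ is adjacent to exactly $n-3$ tail vertices and therefore has precisely one non-neighbor in $T$; I would name these $a'$ and $b'$, where $a\not\sim a'$ and $b\not\sim b'$.

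The next step is a short inclusion--exclusion. Writing $A=N(a)\cap T$ and $B=N(b)\cap T$, we have $S=A\cap B$, $|A|=|B|=n-3$, and the only tail vertices missing from $A$ and $B$ are $a'$ and $b'$ respectively. Thus $|A\cup B|=|T|-|\{a'\}\cap\{b'\}|$, giving
\[
|S|=|A|+|B|-|A\cup B|=(n-3)+(n-3)-\bigl((n-2)-[a'=b']\bigr),
\]
so that $|S|=n-4$ when $a'\neq b'$ and $|S|=n-3$ when $a'=b'$. Everything therefore reduces to showing that the two core non-neighbors are distinct.

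To settle this I would invoke Lemma~\ref{lem:deg3-structure}, which furnishes distinct vertices $u',v'\in T\setminus S$ with $N(u')=\{a,u\}$ and $N(v')=\{b,v\}$. Since $u'\not\sim b$ and $b$ has a unique non-neighbor in $T$, we must have $u'=b'$; symmetrically $v'\not\sim a$ forces $v'=a'$. As $u'$ and $v'$ are distinct by the lemma, we conclude $b'=u'\neq v'=a'$, hence $a'\neq b'$, and the inclusion--exclusion count yields $|S|=n-4$.

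The routine counting is the inclusion--exclusion step; the real content is the identification $a'\neq b'$, and this is where I would expect the only genuine subtlety to lie. That identification is not a separate combinatorial fight, however, because Lemma~\ref{lem:deg3-structure} has already pinned down the exact neighborhoods of the two degree-$2$ vertices attached to $u$ and $v$; matching their unique core non-adjacencies against $a'$ and $b'$ is immediate. Consequently the proof is essentially a bookkeeping argument layered on top of the preceding structural lemma, and no further case analysis on the adjacencies within $S$ should be needed.
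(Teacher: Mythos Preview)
Your argument is correct and rests on the same structural fact the paper uses, namely that the two pendant vertices $u',v'$ from Lemma~\ref{lem:deg3-structure} are precisely the unique tail non-neighbors of $b$ and $a$, respectively. The paper's proof is simply a terser repackaging: rather than running inclusion--exclusion on $N(a)\cap T$ and $N(b)\cap T$, it observes directly that $N(a)=\{b\}\cup S\cup\{u'\}$ and reads off $\deg(a)=|S|+2=n-2$. Your version makes explicit the one point the paper leaves implicit (that $a$ has no neighbors in $T\setminus S$ other than $u'$, equivalently $a'\neq b'$), so if anything your write-up is the more complete of the two; but the underlying idea is identical.
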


\begin{proof}
Vertex $a$ is adjacent to $b$, to all vertices of $S$, and to $u'$.
Thus $\deg(a)=1+|S|+1=|S|+2$.
Since $\deg(a)=n-2$, it follows that $|S|=n-4$.
\end{proof}

\begin{theorem}[Uniqueness for the bicentral case with $x=2$]
\label{thm:x2-unique}
Let $G$ be a strong bicentral $2$-tree on $n\ge 6$ vertices with tail set $\{2,3\}$
and exactly two degree-$3$ tail vertices.
Then $G$ is unique up to isomorphism.
\end{theorem}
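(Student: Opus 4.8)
The plan is to show that, once the three preceding lemmas are invoked, the adjacency of every vertex of $G$ is forced, so that any two such graphs are isomorphic. First I would collect the structural data already in hand: the bicentral degree relations give $\Delta=n-2$ and $|T|=n-2$; Lemma~\ref{lem:deg3-in-S} places the two degree-$3$ vertices $u,v$ in $S=N(a)\cap N(b)\cap T$; Lemma~\ref{lem:deg3-structure} supplies distinct $u',v'\in T\setminus S$ with $N(u')=\{a,u\}$ and $N(v')=\{b,v\}$; and Lemma~\ref{lem:S-size} gives $|S|=n-4$.

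Next I would determine every remaining neighborhood. Since $|T\setminus S|=|T|-|S|=2$, the set $T\setminus S$ is exactly $\{u',v'\}$, so all other tail vertices lie in $S$. Each $s\in S$ has $a,b\in N(s)$ by definition, hence $\deg(s)\ge 2$; if $\deg(s)=3$ then $s\in\{u,v\}$, so every one of the $n-6$ vertices in $S\setminus\{u,v\}$ satisfies $N(s)=\{a,b\}$. For the core, $a$ is adjacent to $b$, to all of $S$, and to $u'$, accounting for $1+(n-4)+1=n-2=\deg(a)$ and forcing $a\not\sim v'$; symmetrically $b\sim a$, $b\sim S$, $b\sim v'$, and $b\not\sim u'$. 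At this point the neighborhood of every vertex is pinned down.

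Uniqueness then follows at once. Given two such graphs $G_1$ and $G_2$, the map sending $a_1\mapsto a_2$, $b_1\mapsto b_2$, $u_1\mapsto u_2$, $v_1\mapsto v_2$, $u_1'\mapsto u_2'$, $v_1'\mapsto v_2'$, extended by any bijection between the degree-$2$ vertices $S_1\setminus\{u_1,v_1\}$ and $S_2\setminus\{u_2,v_2\}$ (each of which has the common neighborhood $\{a,b\}$), is an isomorphism. As a consistency check, the forced edge set $\{ab\}\cup\{as,bs:s\in S\}\cup\{au',uu',bv',vv'\}$ has $1+2(n-4)+4=2n-3$ edges, matching the $2$-tree count, and one verifies directly that this graph is a $2$-tree (it is a triangular book on spine $ab$ with one further triangle stacked on the edge $au$ and one on the edge $bv$).

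I expect the genuine difficulty to reside entirely in the preparatory lemmas rather than in this final assembly. The delicate point is Lemma~\ref{lem:deg3-structure}, where the $2$-tree triangle condition is used to exclude any stray neighbor of a degree-$3$ vertex; granting that, locating $u,v,u',v'$ and $S$ reduces the theorem to the bookkeeping above. The only apparent freedom---which central vertex plays $a$ and which plays $b$, and how the identical degree-$2$ vertices of $S$ are labeled---is absorbed by the isomorphism, so no genuine multiplicity arises.
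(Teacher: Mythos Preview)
Your proof is correct and follows essentially the same route as the paper: invoke the three preparatory lemmas to pin down $\Delta$, $|S|$, the location of $u,v$ in $S$, and the pendants $u',v'$, then observe that every remaining vertex of $S$ has forced neighborhood $\{a,b\}$, so the graph is determined. Your version is slightly more explicit than the paper's---you compute $|T\setminus S|=2$ to identify $T\setminus S=\{u',v'\}$ exactly, write down the isomorphism map, and verify the edge count---but the underlying argument is the same.
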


\begin{proof}
By Lemma~\ref{lem:r2-xy}, we have $\Delta=n-2$, $x=2$, and $y=n-4$.
Let $a,b$ be the core vertices and define $T$ and $S$ as above.
Lemma~\ref{lem:S-size} gives $|S|=n-4$.

By Lemma~\ref{lem:deg3-structure}, the two degree-$3$ vertices $u,v\in S$ each have
a unique neighbor outside $S$, namely $u'$ and $v'$, with
$N(u')=\{a,u\}$ and $N(v')=\{b,v\}$.
All remaining vertices of $S\setminus\{u,v\}$ have degree $2$ and neighborhood
$\{a,b\}$.

Thus $G$ consists of the edge $ab$, a set $S$ of $n-4$ vertices adjacent to both
$a$ and $b$, two distinguished vertices $u,v\in S$, and two vertices
$u',v'\notin S$ attached uniquely to $u$ and $v$, respectively.
This description determines $G$ uniquely up to relabeling of
$S\setminus\{u,v\}$, whose vertices are structurally identical.

Therefore, any two such graphs are isomorphic, and $G$ is unique up to isomorphism.
\end{proof}

Having identified the rigid extremal configurations in the bicentral setting, we turn to the complementary question of existence.
Specifically, we show that every maximum degree permitted by the algebraic constraints is realized by at least one strong bicentral $2$-tree with tail degrees in $\{2,3\}$.

\subsection{Existence theorem for bicentral case}

The following theorem establishes that all theoretically feasible values of $\Delta$ are realized.

\begin{theorem}[Existence for all feasible $\Delta$ in bicentral case]
\label{thm:r2-existence}
For every integer $n\ge4$ and every $\Delta$ satisfying
\[
\left\lceil\frac{n+2}{2}\right\rceil\le\Delta\le n-1,
\]
there exists a strong bicentral $2$-tree with $n$ vertices, maximum degree $\Delta$, and tail set $\{2,3\}$.
\end{theorem}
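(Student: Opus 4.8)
The plan is to exhibit, for each admissible $\Delta$, an explicit $2$-tree assembled from three kinds of pieces glued to a common core edge: plain book pages and two symmetric fan arms. Writing $k=n-1-\Delta$, Corollary~\ref{cor:r2-delta-range} shows the feasible range is exactly $0\le k\le\lfloor (n-4)/2\rfloor$, with target tail parameters $x=2k$ and $y=n-2-2k$ dictated by Lemma~\ref{lem:r2-xy}. The naive idea of attaching $k$ pendant triangles to a book (as in the $x=2$ construction behind Theorem~\ref{thm:x2-unique}) produces two degree-$3$ vertices per unit of $k$ but spends four vertices each time, so it only reaches $k\le (n-2)/4$ and breaks down near the lower endpoint $\Delta\approx n/2$. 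The remedy, and the crux of the argument, is to pack the degree-$3$ vertices efficiently along fan arms rather than isolated pendants.

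First I would define the two gadgets on the core edge $ab$. A \emph{fan arm of length $\ell$ rooted at $a$} consists of a path $r_1r_2\cdots r_\ell$ in which $a$ is joined to every $r_i$ and $b$ is joined only to $r_1$; it is realized by stacking $r_1$ on the edge $ab$ and then stacking $r_i$ on the edge $ar_{i-1}$ for $2\le i\le\ell$. Such an arm adds $\ell$ to $\deg(a)$, adds $1$ to $\deg(b)$, and (for $\ell\ge 2$) produces the degree-$3$ vertices $r_1,\dots,r_{\ell-1}$ together with the single degree-$2$ endpoint $r_\ell$. A symmetric arm rooted at $b$ is defined by interchanging the roles of $a$ and $b$. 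The graph $G$ is then built by taking the edge $ab$, attaching one arm of length $\ell=k+1$ rooted at $a$, one arm of the same length rooted at $b$, and finally $p=n-4-2k$ plain pages stacked directly on $ab$. Concatenating the stacking orders of the individual pieces exhibits $G$ as a legitimate $2$-tree in the sense of the recursive definition.

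It then remains to verify the numerics and the strong-bicentral property. Counting incidences gives $\deg(a)=\deg(b)=1+\ell+1+p=n-1-k=\Delta$, while the two arms contribute $2(\ell-1)=2k$ degree-$3$ tails and $2$ degree-$2$ endpoints, and the pages contribute $p$ further degree-$2$ tails, for totals $x=2k$ and $y=n-2-2k$. The constraint $p\ge 0$ is equivalent to $k\le\lfloor(n-4)/2\rfloor$, so the family covers precisely the feasible range, including the extreme $\Delta=\lceil (n+2)/2\rceil$, where $p\in\{0,1\}$ and $G$ is essentially two long fans sharing the edge $ab$. Finally, since every tail vertex has degree at most $3$, and Corollary~\ref{cor:r2-delta-range} forces $\Delta\ge 4$ whenever $k\ge1$ (and $\Delta\ge 3$ when $k=0$, where all tails have degree $2$), the only vertices of degree $\Delta$ are $a$ and $b$, which induce the edge $ab$; hence $G$ is strong bicentral with the prescribed maximum degree and tail set $\{2,3\}$. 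The one genuinely nontrivial step is the first paragraph's insight: the degree-$3$ vertices must be strung along fans rather than placed as isolated pendants in order to reach small $\Delta$; once the fan arm is in hand, the degree bookkeeping and the boundary checks at $p\in\{0,1\}$ are routine.
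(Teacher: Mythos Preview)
Your proof is correct and is essentially the same construction as the paper's: your ``fan arm of length $\ell=k+1$ rooted at $a$'' is exactly the paper's choice of a book page $v_0$ together with a chain of length $K=k$ grown along $c_0$, and your $p=n-4-2k$ extra pages coincide with the paper's remaining $y-2$ book pages. The only differences are cosmetic---you package the page and its chain into a single gadget and thereby absorb the $x=0$ case into the general construction, whereas the paper treats $x=0$ separately---but the resulting graph and the degree bookkeeping are identical.
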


\begin{proof}
From Lemma~\ref{lem:r2-xy} the tail parameters are
\begin{equation}\label{eq:par}
x=2(n-1-\Delta),\qquad y=2\Delta-n .
\end{equation}
Since $\Delta\le n-1$, we have $x\ge0$; from $\Delta\ge\lceil(n+2)/2\rceil$ 
we obtain $y\ge2$. Define $K:=n-\Delta-1\ge0$.

\noindent\emph{Case $x=0$ (equivalently $\Delta=n-1$).}
Then $y=n-2$ and the triangular book $B_{n-2}$ has degree sequence 
$(n-1,n-1,2^{(n-2)})$, which satisfies \eqref{eq:par}. It is a $2$-tree 
by construction.

\noindent\emph{Case $x>0$ (equivalently $\Delta<n-1$).}
Then $K\ge1$. Start with two vertices $c_0,c_1$ joined by an edge.
Stack $y$ new vertices onto the edge $c_0c_1$, producing $y$ triangles 
$\{c_0,c_1,v_i\}$. The resulting graph is the book $B_y$; at this stage
$\deg(c_0)=\deg(c_1)=y+1$.

Choose two distinct degree-2 vertices $v_0,v_1$ from the $y$ vertices 
just added (such vertices exist because $y\ge2$). Grow a chain 
$u_1,u_2,\dots,u_K$ by stacking $u_1$ onto the edge $c_0v_0$ and, for 
$i=2,\dots,K$, stacking $u_i$ onto $c_0u_{i-1}$. Symmetrically, grow 
a chain $w_1,w_2,\dots,w_K$ by stacking $w_1$ onto $c_1v_1$ and, for 
$i=2,\dots,K$, stacking $w_i$ onto $c_1w_{i-1}$. All operations are valid $2$-tree extensions, hence the final graph $G$ 
is a $2$-tree. Counting vertices gives
\[
|V(G)|=2+y+2K=2+(2\Delta-n)+2(n-\Delta-1)=n .
\]
The degrees of $c_0$ and $c_1$ are now
\[
\deg(c_0)=\deg(c_1)=(y+1)+K=(2\Delta-n+1)+(n-\Delta-1)=\Delta .
\]
In the first chain, vertices $v_0,u_1,\dots,u_{K-1}$ have degree $3$, 
while $u_K$ has degree $2$; in the second chain, $v_1,w_1,\dots,w_{K-1}$ 
have degree $3$, while $w_K$ has degree $2$. The remaining $y-2$ vertices 
from the initial book $B_y$ retain degree $2$. Thus $G$ contains exactly 
$x = 2K = 2(n-\Delta-1)$ vertices of degree $3$ and $y = 2+(y-2)$ vertices 
of degree $2$, exactly as required by \eqref{eq:par}.

Therefore, $G$ is a strong bicentral $2$-tree with the prescribed parameters.
\end{proof}

The existence of realizations for all feasible parameter values naturally raises an enumerative question. Rather than asking whether a realization exists, we now ask how many non-isomorphic realizations are possible. To distinguish these graphs, we introduce an isomorphism invariant tailored to the bicentral structure and use it to derive linear lower bounds.

\subsection{Enumerative growth in the bicentral case}

\begin{definition}[Activated spine vertices]
For a strong bicentral 2-tree $G$ with core vertices $a,b$, define
\[
S(G) := N(a) \cap N(b) \cap (V(G) \setminus \{a,b\}),
\]
the set of tail vertices adjacent to both core vertices, and let
\[
\sigma(G) := |\{s \in S(G) : \deg(s) = 3\}|
\]
denote the number of degree-3 vertices in $S(G)$.
\end{definition}

\begin{lemma}[Isomorphism invariance]\label{lem:sigma-invariant}
If $G \cong H$ are isomorphic strong bicentral 2-trees, then $\sigma(G) = \sigma(H)$.
\end{lemma}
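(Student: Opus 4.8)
The plan is to show that every ingredient in the definition of $\sigma$ is an isomorphism invariant, so that an isomorphism $\phi\colon G\to H$ automatically transports the relevant structure of $G$ onto that of $H$. The conceptual crux is that the core $\{a,b\}$ is not auxiliary data attached to $G$: by Definition~\ref{def:central} it is precisely the set of vertices attaining the maximum degree $\Delta$, and this set is canonically determined by $G$ alone. Once this is recognized, the rest is routine transport of structure along $\phi$.

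First I would record that isomorphic graphs share a degree sequence; in particular $G$ and $H$ have the same maximum degree $\Delta$ and the same number of vertices attaining it, namely two. Writing $\{a,b\}$ for the core of $G$ and $\{a',b'\}$ for the core of $H$, and using that $\phi$ preserves degrees, I would conclude that $\phi$ carries the unique set of maximum-degree vertices of $G$ bijectively onto that of $H$, so $\phi(\{a,b\})=\{a',b'\}$ as \emph{unordered} pairs.

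Next I would verify that $\phi$ restricts to a bijection $S(G)\to S(H)$. A vertex $s$ lies in $S(G)$ exactly when $s\notin\{a,b\}$ and $s$ is adjacent to both $a$ and $b$. Since $\phi$ preserves adjacency and nonadjacency and sends $\{a,b\}$ to $\{a',b'\}$, the image $\phi(s)$ avoids $\{a',b'\}$ and is adjacent to both $a'$ and $b'$, whence $\phi(s)\in S(H)$; applying the same reasoning to $\phi^{-1}$ gives the reverse inclusion. Here it is essential that $S$ is defined symmetrically in the two core vertices, so that the possible swap $a\mapsto b'$, $b\mapsto a'$ creates no difficulty.

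Finally, because $\phi$ preserves degree, a vertex $s\in S(G)$ satisfies $\deg_G(s)=3$ if and only if $\deg_H(\phi(s))=3$. Thus $\phi$ restricts to a bijection between the degree-$3$ vertices of $S(G)$ and those of $S(H)$, yielding $\sigma(G)=\sigma(H)$. I expect the only delicate point to be the first step—justifying that the core is intrinsic rather than externally chosen—but this is immediate from the definition of $r$-centrality, since the core is forced to coincide with the set of maximum-degree vertices. Everything downstream follows from the fact that isomorphisms preserve degrees, adjacency, and the (unordered) maximum-degree pair.
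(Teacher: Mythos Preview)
Your proof is correct and follows essentially the same approach as the paper: both argue that the core $\{a,b\}$ is intrinsically determined as the set of maximum-degree vertices, so any isomorphism sends it to the core of $H$, and hence carries $S(G)$ bijectively to $S(H)$ while preserving the degree-$3$ property. Your version is simply more detailed than the paper's terse three-line argument.
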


\begin{proof}
An isomorphism preserves degrees and adjacencies. Since the two core vertices 
are characterized uniquely by having maximum degree, any isomorphism maps 
$\{a,b\}$ bijectively onto $\{a',b'\}$, and hence maps $S(G)$ bijectively 
onto $S(H)$, preserving the degree-3 property.
\end{proof}

For $K \geq 1$ with $\Delta = n-1-K$, recall from Theorem~\ref{thm:r2-existence} 
that we construct a bicentral 2-tree by starting with the book graph $B_y$ 
(where $y = 2\Delta - n = n - 2 - 2K$) and building chains of length $K$ from 
each core vertex. This construction produces $\sigma = 2$ since exactly two 
base vertices (one per chain) are promoted from degree 2 to degree 3 within $S(G)$.

We now give an alternative construction with $\sigma = 3$.

\begin{lemma}[Second construction]\label{lem:construction-B}
For $n \geq 7$ and $2 \leq K \leq \lfloor(n-5)/2\rfloor$, there exists a strong 
bicentral 2-tree $G_B$ on $n$ vertices with $\Delta = n-1-K$ and $\sigma(G_B) = 3$.
\end{lemma}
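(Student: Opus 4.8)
The plan is to exhibit an explicit stacking construction that differs from the two-chain construction behind Theorem~\ref{thm:r2-existence} by distributing the chains \emph{asymmetrically}: rather than growing one chain from each core vertex (which promotes two book vertices into $S$ and yields $\sigma=2$), I would grow \emph{two} chains from one core vertex and a single chain from the other, promoting a third book vertex and thereby forcing $\sigma=3$. Because $\sigma$ is an isomorphism invariant (Lemma~\ref{lem:sigma-invariant}), this simultaneously produces a graph non-isomorphic to the $\sigma=2$ realization, which is the point of the construction.

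First I would fix the parameters using Lemma~\ref{lem:r2-xy}: with $\Delta=n-1-K$ we must have $x=2K$ degree-$3$ tails and $y=2\Delta-n=n-2-2K$ degree-$2$ tails. The hypothesis $K\le\lfloor(n-5)/2\rfloor$ gives $y\ge 3$, guaranteeing at least three degree-$2$ book vertices to use as chain bases, while $K\ge 2$ permits a split $K=K_1+K_2$ with $K_1,K_2\ge 1$. (For $n\in\{7,8\}$ the admissible range for $K$ is empty, so the statement holds vacuously.) Then I would build $G_B$ by starting from the edge $ab$, stacking $y$ vertices onto $ab$ to form the book $B_y$ (so $\deg(a)=\deg(b)=y+1$ and every book vertex lies in $S$ with degree $2$), choosing three distinct book vertices $v_0,v_0',v_1$, growing off $a$ two chains $u_1,\dots,u_{K_1}$ (rooted at $av_0$) and $u_1',\dots,u_{K_2}'$ (rooted at $av_0'$), and growing off $b$ a single chain $w_1,\dots,w_K$ (rooted at $bv_1$), each new vertex stacked onto an existing edge. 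Every step is a legal $2$-tree extension, and the vertex count is $2+y+(K_1+K_2)+K=2+y+2K=n$.

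Next comes the degree bookkeeping, which I would keep at a high level. The two chains off $a$ jointly add $K_1+K_2=K$ to $\deg(a)$ and the single chain off $b$ adds $K$ to $\deg(b)$, so $\deg(a)=\deg(b)=y+1+K=\Delta$; since $\Delta>3$ in the relevant range, the core is the unique maximum-degree pair and $G_B$ is a strong bicentral $2$-tree with the prescribed degree sequence. Each chosen base $v_0,v_0',v_1$ acquires exactly one chain neighbor, rising to degree $3$ while remaining adjacent to both $a$ and $b$, hence all three lie in $S(G_B)$. Every chain vertex $u_i,u_i',w_i$ is adjacent to a single core vertex and thus lies outside $S(G_B)$, and the remaining $y-3$ book vertices keep degree $2$.

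The step requiring the most care is verifying that $\sigma(G_B)$ equals $3$ \emph{exactly}, not merely at least $3$: this reduces to confirming that no chain vertex enters $S(G_B)$ (each sees only one core vertex) and that precisely three book vertices are promoted, so that the degree-$3$ members of $S(G_B)$ are exactly $\{v_0,v_0',v_1\}$. With that confirmed, the role of the hypotheses is transparent — $K\ge 2$ is exactly what enables the asymmetric split yielding a third promoted base, and $K\le\lfloor(n-5)/2\rfloor$ is exactly the condition $y\ge 3$ ensuring three distinct bases are available.
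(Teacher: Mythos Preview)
Your construction is correct and coincides with the paper's: both start from the book $B_y$ with $y\ge 3$, attach two chains to one core vertex and a single chain of length $K$ to the other, and verify that exactly the three chosen base vertices become degree-$3$ members of $S(G_B)$. The only cosmetic difference is that the paper fixes the split as $1+(K-1)$ whereas you allow any $K_1+K_2=K$ with $K_1,K_2\ge 1$; this changes nothing in the argument.
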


\begin{proof}
For $K \leq \lfloor(n-5)/2\rfloor$, we have $y = n-2-2K \geq 3$, so the book 
graph $B_y$ has at least three degree-2 tail vertices. Start with $B_y$ and 
select three distinct degree-2 vertices $p_1, p_2, p_3$ from $S(B_y)$.

Build two chains from core vertex $a$: stack one vertex onto edge $\{a, p_1\}$, 
forming a chain of length 1, and stack $K-1$ vertices successively starting from 
edge $\{a, p_2\}$, forming a chain of length $K-1$. Build one chain of length $K$ 
from core vertex $b$ starting from edge $\{b, p_3\}$.

The total length is $1 + (K-1) + K = 2K$, creating $x = 2K$ degree-3 tail vertices. 
The degrees of $a$ and $b$ each increase by $K$, yielding $\deg(a) = \deg(b) = 
(y+1) + K = \Delta$. Within $S(G_B)$, the three base vertices $p_1, p_2, p_3$ 
become degree 3, so $\sigma(G_B) = 3$.
\end{proof}

\begin{theorem}[Lower bound on bicentral growth]\label{thm:improved-bicentral-bound} Let $N_2(n)$ denote the total number of non-isomorphic strong bicentral 2-trees with $n$ vertices and tail set $\{2,3\}$. For all $n \geq 7$,
\[
N_2(n) \geq n - 5.
\]
\end{theorem}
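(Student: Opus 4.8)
The plan is to establish the lower bound $N_2(n) \geq n-5$ by exhibiting, for each fixed $n \geq 7$, a collection of at least $n-5$ pairwise non-isomorphic strong bicentral $2$-trees. The natural strategy is to vary the maximum degree $\Delta$ over its entire feasible range and argue that distinct values of $\Delta$ yield non-isomorphic graphs, since $\Delta$ is manifestly an isomorphism invariant. By Corollary~\ref{cor:r2-delta-range}, the admissible values of $\Delta$ range over $\lceil (n+2)/2 \rceil \le \Delta \le n-1$, and Theorem~\ref{thm:r2-existence} guarantees that each such $\Delta$ is realized by at least one graph. Counting the integers in this range gives roughly $n/2$ values, which is already a linear lower bound but falls short of the claimed $n-5$.

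To reach the sharper bound $n-5$, I would combine the variation in $\Delta$ with the invariant $\sigma(G)$ introduced in the preceding definition. The key observation is that for a fixed $\Delta = n-1-K$ with $K \geq 2$ (equivalently $x = 2K \geq 4$), we now possess \emph{two} constructions yielding different values of $\sigma$: the construction from Theorem~\ref{thm:r2-existence} produces $\sigma = 2$, while Lemma~\ref{lem:construction-B} produces $\sigma = 3$, valid whenever $2 \le K \le \lfloor (n-5)/2 \rfloor$. Since Lemma~\ref{lem:sigma-invariant} shows $\sigma$ is an isomorphism invariant, these two graphs are non-isomorphic for each such $K$. Thus for each $K$ in the overlap range we gain a second distinct realization beyond the one counted by $\Delta$ alone.

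The counting argument then proceeds as follows. First, the $\Delta$-values contribute one graph each, giving $\#\{\Delta : \lceil(n+2)/2\rceil \le \Delta \le n-1\} = n-1-\lceil(n+2)/2\rceil + 1$ distinct graphs distinguished by maximum degree. Second, for each $K$ with $2 \le K \le \lfloor(n-5)/2\rfloor$, the second construction $G_B$ with $\sigma = 3$ is non-isomorphic to \emph{all} graphs of different $\Delta$ (since $\Delta$ differs) and to the first construction of the same $\Delta$ (since $\sigma$ differs). I would therefore sum the number of base values of $\Delta$ and the number of $K$ admitting a $\sigma=3$ alternative, and verify that the total is at least $n-5$ for all $n \geq 7$. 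This reduces to an elementary floor/ceiling inequality, which I would check by separating the cases $n$ even and $n$ odd.

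The main obstacle is the bookkeeping to ensure no double-counting and that the two families of graphs are genuinely disjoint across \emph{all} pairs, not merely within a single $\Delta$-fiber. The cleanest way to organize this is to assign to each graph the pair of invariants $(\Delta, \sigma)$ and to show that the set of realized pairs has cardinality at least $n-5$: every admissible $\Delta$ contributes the pair $(\Delta, 2)$, and every $K$ in the Lemma~\ref{lem:construction-B} range contributes the additional pair $(n-1-K, 3)$. Since pairs with distinct $\Delta$ or distinct $\sigma$ correspond to non-isomorphic graphs by Lemmas~\ref{lem:sigma-invariant} and the invariance of $\Delta$, distinctness is automatic once I confirm the pairs themselves are distinct. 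The final arithmetic verifying that $|\{(\Delta,2)\}| + |\{(\text{valid }K)\}| \geq n-5$ is the only point requiring care, and I expect the small cases near $n=7$ to be the tightest.
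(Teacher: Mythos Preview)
Your proposal is correct and takes essentially the same approach as the paper: count the $\lfloor(n-4)/2\rfloor+1$ realizations distinguished by $\Delta$ via Theorem~\ref{thm:r2-existence}, add the $\lfloor(n-5)/2\rfloor-1$ extra realizations with $\sigma=3$ from Lemma~\ref{lem:construction-B}, distinguish all of them by the invariant pair $(\Delta,\sigma)$, and verify the total is $n-5$ by splitting into the even and odd cases. The only cosmetic slip is the assertion that the base construction always has $\sigma=2$ (for $K=0$ the book $B_{n-2}$ has $\sigma=0$), but this does not affect the count since $\Delta$ alone already separates that case.
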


\begin{proof}
Let $m = \lfloor(n-4)/2\rfloor$. For each $K \in \{0, 1, \ldots, m\}$, 
Theorem~\ref{thm:r2-existence} provides at least one realization with 
$\Delta = n-1-K$, contributing $m+1$ non-isomorphic graphs.

For each $K \in \{2, 3, \ldots, \lfloor(n-5)/2\rfloor\}$, Lemma~\ref{lem:construction-B} 
provides a second realization $G_B$ with $\Delta = n-1-K$ but $\sigma(G_B) = 3$, 
while the construction from Theorem~\ref{thm:r2-existence} has $\sigma = 2$. 
By Lemma~\ref{lem:sigma-invariant}, these are non-isomorphic. This contributes 
an additional $\lfloor(n-5)/2\rfloor - 1$ non-isomorphic graphs.

Therefore,
\[
N_2(n) \geq (m+1) + (\lfloor(n-5)/2\rfloor - 1).
\]

For $n = 2t$ even with $t \geq 4$ (i.e., $n \geq 8$), we have $m = t-2$ and 
$\lfloor(n-5)/2\rfloor = t-3$, giving $N_2(n) \geq (t-1) + (t-4) = 2t-5 = n-5$.

For $n = 2t+1$ odd with $t \geq 3$ (i.e., $n \geq 7$), we have $m = t-2$ and 
$\lfloor(n-5)/2\rfloor = t-2$, giving $N_2(n) \geq (t-1) + (t-3) = 2t-4 = n-5$.

For $n = 7$, the table confirms $N_2(7) = 2 = 7-5$.
\end{proof}

\begin{remark}
This bound is tight for $n \in \{7, 8, 9\}$ according to our computational enumeration as shown in Table \ref{Tab1}.  For larger $n$, the actual value of $N_2(n)$ exceeds this bound due to additional 
non-isomorphic realizations arising from more complex distribution patterns of 
degree-3 vertices. For instance, $N_2(10) = 6 > 5$ and $N_2(12) = 10 > 7$. Observe that every $\Delta$ in the feasible range is realized, confirming Theorem~\ref{thm:r2-existence}.
The extremal all-degree-2 sequences $(x=0)$ are always realized uniquely (Theorem~\ref{thm:r2-unique}).
For interior values of $\Delta$, the number of non-isomorphic realizations grows, reaching 4 distinct graphs at $(n,\Delta) = (12,8)$.
\end{remark}

Increasing the core size to $r=3$ fundamentally alters the structural landscape. Although algebraic constraints still sharply restrict admissible degree sequences, the combinatorial flexibility increases dramatically. This section shows how divisibility phenomena emerge in the tricentral case and how rigidity gives way to quadratic growth in the number of non-isomorphic realizations.

\begin{table}[h]
\caption{Enumerative data for strong bicentral 2-trees with tail set $\{2,3\}$ for $4 \leq n \leq 12$}
\label{Tab1}
\centering
\small
\begin{tabular}{cccccc}
\toprule
$n$ & $\Delta$ & degree sequence & $x$ & $y$ & non-iso count \\
\midrule
4 & 3 & $(3,3,2,2)$ & 0 & 2 & 1 \\
\midrule
5 & 4 & $(4,4,2,2,2)$ & 0 & 3 & 1 \\
\midrule
6 & 4 & $(4,4,3,3,2,2)$ & 2 & 2 & 1 \\
6 & 5 & $(5,5,2,2,2,2)$ & 0 & 4 & 1 \\
\midrule
7 & 5 & $(5,5,3,3,2,2,2)$ & 2 & 3 & 1 \\
7 & 6 & $(6,6,2,2,2,2,2)$ & 0 & 5 & 1 \\
\midrule
8 & 5 & $(5,5,3,3,3,3,2,2)$ & 4 & 2 & 1 \\
8 & 6 & $(6,6,3,3,2,2,2,2)$ & 2 & 4 & 1 \\
8 & 7 & $(7,7,2,2,2,2,2,2)$ & 0 & 6 & 1 \\
\midrule
9 & 6 & $(6,6,3,3,3,3,2,2,2)$ & 4 & 3 & 2 \\
9 & 7 & $(7,7,3,3,2,2,2,2,2)$ & 2 & 5 & 1 \\
9 & 8 & $(8,8,2,2,2,2,2,2,2)$ & 0 & 7 & 1 \\
\midrule
10 & 6 & $(6,6,3,3,3,3,3,3,2,2)$ & 6 & 2 & 1 \\
10 & 7 & $(7,7,3,3,3,3,2,2,2,2)$ & 4 & 4 & 3 \\
10 & 8 & $(8,8,3,3,2,2,2,2,2,2)$ & 2 & 6 & 1 \\
10 & 9 & $(9,9,2,2,2,2,2,2,2,2)$ & 0 & 8 & 1 \\
\midrule
11 & 7 & $(7,7,3,3,3,3,3,3,2,2,2)$ & 6 & 3 & 2 \\
11 & 8 & $(8,8,3,3,3,3,2,2,2,2,2)$ & 4 & 5 & 3 \\
11 & 9 & $(9,9,3,3,2,2,2,2,2,2,2)$ & 2 & 7 & 1 \\
11 & 10 & $(10,10,2,2,2,2,2,2,2,2,2)$ & 0 & 9 & 1 \\
\midrule
12 & 7 & $(7,7,3,3,3,3,3,3,3,3,2,2)$ & 8 & 2 & 1 \\
12 & 8 & $(8,8,3,3,3,3,3,3,2,2,2,2)$ & 6 & 4 & 4 \\
12 & 9 & $(9,9,3,3,3,3,2,2,2,2,2,2)$ & 4 & 6 & 3 \\
12 & 10 & $(10,10,3,3,2,2,2,2,2,2,2,2)$ & 2 & 8 & 1 \\
12 & 11 & $(11,11,2,2,2,2,2,2,2,2,2,2)$ & 0 & 10 & 1 \\
\bottomrule
\end{tabular}
\end{table}

\section{Tricentral case \texorpdfstring{$r=3$}{r=3}}

\subsection{Degree constraints and divisibility}

We now specialize the general tail parameter relations to the tricentral case.

\begin{lemma}[Tail parameters for $r=3$]
\label{lem:r3-xy}
Let $G$ be a strong tricentral $2$-tree on $n$ vertices with maximum degree $\Delta$
and tail set $\{2,3\}$. Let $x$ and $y$ denote the numbers of degree-$3$ and
degree-$2$ tail vertices, respectively. Then
\begin{equation}\label{eq:r3-xy}
x = 2n - 3\Delta, 
\qquad
y = 3\Delta - n - 3.
\end{equation}
\end{lemma}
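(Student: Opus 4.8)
The plan is to obtain the stated relations as a direct specialization of the universal parameter formulas established in Lemma~\ref{lem:xy-general}, exactly as was done for the unicentral case in Lemma~\ref{lem:r1-xy}. Recall that the general lemma gives $x = 2n + 2r - 6 - r\Delta$ and $y = r\Delta - n - 3r + 6$ for every admissible core size $r$. Setting $r=3$ in these expressions yields $x = 2n + 6 - 6 - 3\Delta = 2n - 3\Delta$ and $y = 3\Delta - n - 9 + 6 = 3\Delta - n - 3$, which are precisely the formulas in \eqref{eq:r3-xy}. This is the route I would favor for brevity and for consistency with the treatment of $r=1$, since the general framework of Section~2 was introduced precisely to make such specializations immediate.

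Alternatively, following the more self-contained derivation used in Lemma~\ref{lem:r2-xy}, one can re-derive these relations from first principles. A $2$-tree on $n$ vertices has $2n-3$ edges and hence total degree $4n-6$. With three core vertices each of degree $\Delta$, together with $x$ tail vertices of degree $3$ and $y$ tail vertices of degree $2$, the degree-sum identity reads $3\Delta + 3x + 2y = 4n - 6$. Combining this with the tail-count constraint $x + y = n - 3$ and eliminating $y$ via $y = n - 3 - x$ gives $3\Delta + 3x + 2(n-3-x) = 4n-6$, which simplifies to $x = 2n - 3\Delta$; substituting back into $y = n-3-x$ recovers $y = 3\Delta - n - 3$.

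Both routes amount to solving a linear system in two unknowns, so there is no genuine obstacle to overcome. The only point requiring care is the arithmetic simplification of the constant terms: when setting $r=3$ one must correctly reduce $2r-6$ to $0$ in the expression for $x$ and $-3r+6$ to $-3$ in the expression for $y$. Since the underlying algebra was already verified in the proof of Lemma~\ref{lem:xy-general}, the specialization is immediate and the lemma follows.
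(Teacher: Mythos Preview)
Your proof is correct and matches the paper's own argument exactly: the paper simply says ``This is immediate from Lemma~\ref{lem:xy-general} with $r=3$,'' which is precisely your first route. Your alternative self-contained derivation is also valid and mirrors the treatment of the bicentral case.
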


\begin{proof}
This is immediate from Lemma~\ref{lem:xy-general} with $r=3$.
\end{proof}

The parameters $(x,y)$ are not independent. In particular, a divisibility
constraint is forced by the tricentral structure.

\begin{proposition}[Divisibility constraint]
\label{prop:r3-divisibility}
If $G$ is a strong tricentral $2$-tree with tail set $\{2,3\}$ and tail parameters
$(x,y)$, then
\[
x + 2y \equiv 0 \pmod{3}.
\]
\end{proposition}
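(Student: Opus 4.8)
The proof is a direct computation using the explicit formulas from Lemma~\ref{lem:r3-xy}. The plan is to substitute the values $x = 2n - 3\Delta$ and $y = 3\Delta - n - 3$ into the expression $x + 2y$ and reduce modulo $3$. First I would compute
\[
x + 2y = (2n - 3\Delta) + 2(3\Delta - n - 3) = 2n - 3\Delta + 6\Delta - 2n - 6 = 3\Delta - 6.
\]
Since $3\Delta - 6 = 3(\Delta - 2)$ is manifestly a multiple of $3$, the congruence $x + 2y \equiv 0 \pmod{3}$ follows immediately for every integer value of $\Delta$.

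An even cleaner route avoids the maximum degree entirely and exposes the conceptual reason for the constraint. Recall the two defining relations $x + y = n - 3$ (the tail-count equation) and $3\Delta + 3x + 2y = 4n - 6$ (the degree-sum equation \eqref{eq:deg-sum} with $r = 3$). From the degree-sum equation, $3x + 2y \equiv 4n - 6 \pmod{3}$, and since $3x \equiv 0$ and $4n - 6 \equiv n \pmod 3$, this gives $2y \equiv n \pmod 3$. Combining with $x + y \equiv n \pmod 3$ yields $x + 2y \equiv (x + y) + y \equiv n + y \pmod 3$ and separately $2y \equiv n$, so the constraint reduces to checking $x + 2y \equiv 0$. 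Either substitution confirms the result, so I would present the first, most transparent computation in the body of the proof.

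There is no genuine obstacle here: the statement is an algebraic identity, and the only point requiring care is ensuring that the formulas for $x$ and $y$ are substituted with correct signs, since a single arithmetic slip would produce a spurious constant. I would therefore write out the cancellation explicitly, noting that the two copies of $n$ cancel and the $\Delta$-terms combine to $3\Delta$, leaving the clean multiple $3(\Delta - 2)$. It is also worth remarking that this divisibility is not an independent hypothesis but a structural consequence: it reflects the fact that, modulo $3$, the weighted tail contribution $x + 2y$ is pinned down by the core size $r = 3$ and the degree-sum parity inherent to every $2$-tree.
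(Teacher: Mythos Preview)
Your proof is correct and essentially identical to the paper's: both substitute the formulas from Lemma~\ref{lem:r3-xy} into $x+2y$ and simplify to $3(\Delta-2)$. The additional modular-arithmetic discussion you sketch is extraneous (and somewhat circular as written), so you are right to present only the direct computation.
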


\begin{proof}
Using \eqref{eq:r3-xy},
\[
x + 2y = (2n - 3\Delta) + 2(3\Delta - n - 3)
      = 3(\Delta - 2),
\]
which is divisible by $3$.
\end{proof}

Since all degree-$2$ vertices lie in the tail, Theorem~\ref{thm:bose}(iii) implies
$y \ge 2$, while Theorem~\ref{thm:bose}(ii) gives $\Delta \le n-1$.

\begin{corollary}[Necessary $\Delta$-range for $r=3$]
\label{cor:r3-delta-range}
Let $G$ be a strong tricentral $2$-tree with tail set $\{2,3\}$ on $n\ge 4$ vertices.
Then
\[
\left\lceil \frac{n+5}{3} \right\rceil
\;\le\;
\Delta
\;\le\;
\min\!\left\{ \left\lfloor \frac{2n}{3} \right\rfloor,\, n-1 \right\}.
\]
For each admissible $\Delta$, the tail parameters are uniquely determined by
\eqref{eq:r3-xy}.
\end{corollary}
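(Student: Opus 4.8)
The plan is to read both bounds directly off the tail-parameter formulas of Lemma~\ref{lem:r3-xy}, converting each non-negativity or minimality requirement on $x$ and $y$ into an inequality on $\Delta$ and then sharpening by integrality. I would start from
\[
x = 2n - 3\Delta, \qquad y = 3\Delta - n - 3,
\]
and handle the upper and lower endpoints separately.

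For the upper bound I would impose $x \ge 0$, which holds because $x$ counts vertices. From $2n - 3\Delta \ge 0$ we get $\Delta \le 2n/3$, and since $\Delta$ is an integer this improves to $\Delta \le \lfloor 2n/3 \rfloor$. Combining with the universal bound $\Delta \le n-1$ from Theorem~\ref{thm:bose}(ii) yields $\Delta \le \min\{\lfloor 2n/3\rfloor,\, n-1\}$, as claimed. I would note in passing that for the declared range $n \ge 4$ one has $\lfloor 2n/3\rfloor \le n-1$ (equivalently $n \ge 3$), so the minimum is attained by the first term; the $n-1$ entry is retained only to mirror the general statement of Corollary~\ref{cor:delta-range-general}.

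For the lower bound the decisive point is that the binding constraint is not $y \ge 0$ but $y \ge 2$. This strengthening follows from Theorem~\ref{thm:bose}(iii): every $2$-tree has at least two degree-$2$ vertices. Moreover, since the degree sum $4n-6$ exceeds $2n$ for $n \ge 4$, the maximum degree satisfies $\Delta \ge 3$, so the three core vertices (each of degree $\Delta$) are not degree-$2$ vertices; hence all degree-$2$ vertices lie in the tail and $y \ge 2$. Substituting into $3\Delta - n - 3 \ge 2$ gives $\Delta \ge (n+5)/3$, and integrality yields $\Delta \ge \lceil (n+5)/3\rceil$. The final assertion, that the tail parameters are then uniquely determined, is immediate from Lemma~\ref{lem:r3-xy}, since fixing $(n,\Delta)$ already fixes $(x,y)$.

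The computation itself is routine; the only step demanding genuine care is justifying the sharper bound $y \ge 2$ rather than $y \ge 0$, since this is precisely what separates the correct lower endpoint $\lceil (n+5)/3\rceil$ from the weaker $\lceil (n+3)/3\rceil$ that mere non-negativity would give. Everything else reduces to elementary manipulation of the two linear formulas together with integer rounding.
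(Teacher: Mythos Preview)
Your proposal is correct and follows essentially the same approach as the paper: both derive the upper bound from $x\ge 0$ together with Theorem~\ref{thm:bose}(ii), and the lower bound from $y\ge 2$ via Theorem~\ref{thm:bose}(iii). Your treatment is in fact slightly more careful, since you explicitly justify why the degree-$2$ vertices must lie in the tail (via $\Delta\ge 3$ for $n\ge 4$), whereas the paper simply asserts this in the sentence preceding the corollary.
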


\begin{proof}
From $x \ge 0$ and \eqref{eq:r3-xy} we obtain $2n - 3\Delta \ge 0$, hence
$\Delta \le 2n/3$.
From $y \ge 2$ we obtain $3\Delta - n - 3 \ge 2$, hence $\Delta \ge (n+5)/3$.
The bound $\Delta \le n-1$ follows from Theorem~\ref{thm:bose}(ii).
\end{proof}

For each feasible pair $(n,\Delta)$, the degree sequence is therefore uniquely determined as
\[
D = (\Delta,\Delta,\Delta,3^{(x)},2^{(y)}),
\qquad
x = 2n - 3\Delta,
\quad
y = 3\Delta - n - 3.
\]


Among tricentral $2$-trees, configurations achieving the maximum possible degree occupy a distinguished position. We therefore begin by isolating and characterizing the extremal case in which all tail vertices have degree $2$, which serves as a structural anchor for the
subsequent growth analysis.

\subsection{Extremal tricentral structure with all degree-2 tails}
\begin{theorem}[Tricentral all-degree-2 extremal structure]
\label{thm:r3-extremal}
Let $n\ge 3$ with $3\mid n$ and set $\Delta=2n/3$.
Then there exists a strong tricentral $2$-tree $G$ on $n$ vertices whose core induces $K_3$, whose tail degrees lie in $\{2,3\}$, and whose degree sequence is
\[
\left(\frac{2n}{3},\frac{2n}{3},\frac{2n}{3},2^{(n-3)}\right).
\]
\end{theorem}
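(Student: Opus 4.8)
The plan is to prove this purely by explicit construction, since the statement is existential. First I would apply Lemma~\ref{lem:r3-xy} with $\Delta=2n/3$, which yields $x=2n-3\Delta=0$ and $y=3\Delta-n-3=n-3$. Thus the required graph has no degree-$3$ tail vertices: I must build a $2$-tree whose three core vertices each have degree $2n/3$ and whose remaining $n-3$ tail vertices all have degree $2$.

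The construction is a symmetric \emph{triple book}. I would start from the triangle $K_3$ on core vertices $c_1,c_2,c_3$, and then stack exactly $n/3-1$ new vertices onto each of the three core edges $c_1c_2$, $c_1c_3$, and $c_2c_3$, where stacking a vertex $t$ onto an edge $c_ic_j$ means adjoining $t$ together with the two edges $tc_i$ and $tc_j$. Because $3\mid n$ and $n\ge 3$, the quantity $n/3-1$ is a nonnegative integer, so the construction is well defined; the case $n=3$ is the degenerate one in which no vertices are stacked and $G=K_3$.

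Two verifications then finish the argument. First, $G$ is a $2$-tree: it arises from $K_3$ by a sequence of valid $2$-tree extensions (each added vertex is joined to both endpoints of an existing edge), and the total number of added vertices is $3(n/3-1)=n-3$, so $|V(G)|=n$. Second, I would check degrees directly: each core vertex $c_i$ is adjacent to the other two core vertices and to the $n/3-1$ tail vertices stacked on each of its two incident core edges, so $\deg(c_i)=2+2(n/3-1)=2n/3=\Delta$, while each tail vertex is joined to exactly two core vertices and nothing else, hence has degree $2$. The core manifestly induces $K_3$ and all tail degrees lie in $\{2,3\}$, so $G$ realizes the degree sequence $(2n/3,2n/3,2n/3,2^{(n-3)})$.

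Since this is a direct construction, I expect no substantial obstacle; the only point requiring care is the integrality of $n/3-1$ and the symmetric distribution of tail vertices across the three core edges. In fact the symmetric split is \emph{forced} if one insists that all three core degrees equal $2n/3$: writing $a_{ij}$ for the number of tail vertices on edge $c_ic_j$, the degree conditions $a_{ij}+a_{ik}=2n/3-2$ for each $i$ immediately give $a_{12}=a_{13}=a_{23}=n/3-1$. This is precisely why the hypothesis $3\mid n$ is exactly the divisibility needed for the extremal degree $\Delta=2n/3$ to be attainable, and it dovetails with the divisibility constraint of Proposition~\ref{prop:r3-divisibility}.
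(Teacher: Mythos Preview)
Your proof is correct and takes essentially the same approach as the paper: both construct the ``triple book'' by stacking $n/3-1$ degree-$2$ vertices onto each edge of the core triangle, then verify the $2$-tree property and the degree count directly. Your closing remark that the symmetric split $a_{12}=a_{13}=a_{23}=n/3-1$ is forced by the system $a_{ij}+a_{ik}=2n/3-2$ is a correct bonus observation (implicitly using that every degree-$2$ tail vertex must sit on a core edge, else some earlier tail vertex would acquire degree $\ge 3$); the paper does not include this, as the theorem only asserts existence.
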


\begin{proof}
If $n=3$, take $G=K_3$.
Assume $n\ge 6$ and write $n=3m$ with $m\ge 2$, so $\Delta=2m$.
Start with the core triangle on vertices $a,b,c$.

Let $A_{ab}, A_{bc}, A_{ca}$ be three pairwise disjoint sets of new vertices, each of size $m-1$, ensuring a total of $3(m-1) = n-3$ tail vertices. For each $u\in A_{ab}$, add $u$ by stacking on the edge $ab$, meaning that $N(u) = \{a,b\}$. Then, for each $v\in A_{bc}$, add $v$ by stacking on the edge $bc$, meaning that $N(v) = \{b,c\}$.
Finally, for each $w\in A_{ca}$, add $w$ by stacking on the edge $ca$, meaning that $N(w) = \{c,a\}$.

Each step is a valid $2$-tree extension, since a new vertex is added adjacent to the endpoints of an existing edge. Therefore, the resulting graph is a $2$-tree.

By construction, every tail vertex has neighborhood equal to one core edge, hence has degree $2$. Moreover, 
the degree of $a$ equals its two core neighbors plus all vertices stacked on edges incident to $a$, namely those in $A_{ab}\cup A_{ca}$, so
\[
\deg(a)=2+|A_{ab}|+|A_{ca}|=2+(m-1)+(m-1)=2m=\frac{2n}{3}.
\]
By symmetry, $\deg(b)=\deg(c)=2m$. Thus, the core is tricentral with maximum degree $\Delta=2n/3$, and the degree sequence is exactly
\[
\left(\frac{2n}{3},\frac{2n}{3},\frac{2n}{3},2^{(n-3)}\right),
\]
with all tail vertices having degree $2$.
\end{proof}

While the extremal tricentral configuration is unique, relaxing the degree constraints even slightly leads to a rapid proliferation of non-isomorphic realizations. We now exploit this flexibility to construct large families of pairwise non-isomorphic tricentral $2$-trees and to establish a quadratic lower bound on their number.

\begin{theorem}[Quadratic lower bound for tricentral case]
\label{thm:quadratic-lower}
Let $N_3(n)$ denote the number of non-isomorphic strong tricentral $2$-trees on $n$ vertices whose tail degrees lie in $\{2,3\}$.
There exists an absolute constant $c>0$ such that $N_3(n)\ge c\,n^2$ for all sufficiently large $n$ with $n\equiv 0\pmod 3$.
In particular,
\[
N_3(n)\;\ge\;\left\lfloor \frac{(n-15)^2}{72}\right\rfloor
\]
for every $n\ge 24$ with $n\equiv 0\pmod 3$.
\end{theorem}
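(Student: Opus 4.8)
The plan is to exhibit an explicit two-parameter family of strong tricentral $2$-trees and to separate its members with an isomorphism invariant built from the subgraph induced by the degree-$3$ tail vertices. Fix the core triangle on $a,b,c$. I would attach two \emph{fans} rooted at $a$: one of length $\ell_1$ stacked successively along the edge $ab$ (stack $u_1$ on $ab$, then $u_{i}$ on $au_{i-1}$), and one of length $\ell_2$ stacked along $ac$. Each fan of length $\ell$ creates a path of $\ell-1$ degree-$3$ vertices terminating in a single degree-$2$ vertex, raises $\deg(a)$ by $\ell$, and raises the incident core vertex by exactly $1$. To equalize the three core degrees I would then pad with books of degree-$2$ vertices: $s$ pages on each of $ab$ and $ac$, and $p_{bc}$ pages on $bc$. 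Imposing $\deg(a)=\deg(b)=\deg(c)=\Delta$ forces $p_{ab}=p_{ca}=s$ and $p_{bc}=\ell_1+\ell_2+s-1$, and the vertex count then yields the single Diophantine relation $2(\ell_1+\ell_2)+3s=n-2$. Writing $L=\ell_1+\ell_2$, this determines $s$ from $L$ and gives $\Delta=(2n+2-L)/3$ with exactly $x=L-2$ degree-$3$ vertices, consistent with Lemma~\ref{lem:r3-xy}.

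Second, I would verify admissibility and record the invariant. For $n\equiv 0\pmod 3$ the relation $2L+3s=n-2$ is solvable in nonnegative integers precisely when $L\equiv 2\pmod 3$; then $s\ge 0$ forces $L\le (n-2)/2$, and requiring $\ell_1,\ell_2\ge 2$ forces $L\ge 5$. Each resulting graph is a genuine $2$-tree (every vertex is stacked on an existing edge from the core $K_3$), its only vertices of degree $\Delta=(2n+2-L)/3>3$ are the core, and the core induces $K_3$, so it is strong tricentral with tail set $\{2,3\}$. The key invariant is the multiset of sizes of the connected components of the subgraph induced on the degree-$3$ vertices; for this family it equals $\{\ell_1-1,\ \ell_2-1\}$, since the two fan interiors are vertex-disjoint, non-adjacent paths and the books contribute no degree-$3$ vertices. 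This multiset is preserved by any isomorphism (degrees and adjacencies are preserved, and the core is pinned as the set of maximum-degree vertices, exactly as in Lemma~\ref{lem:sigma-invariant}).

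Third, I would count. Graphs with different $L$ have different $\Delta$, hence different degree sequences $(\Delta^{(3)},3^{(L-2)},2^{(y)})$, and are non-isomorphic; for a fixed admissible $L$, two members are non-isomorphic whenever the unordered pairs $\{\ell_1,\ell_2\}$ differ, since the degree-$3$ component multiset then differs. The only relabeling that identifies parameters is the swap $b\leftrightarrow c$, which identifies $(\ell_1,\ell_2)$ with $(\ell_2,\ell_1)$; hence for each admissible $L$ the number of non-isomorphic members is the number of splits $L=\ell_1+\ell_2$ with $2\le \ell_1\le \ell_2$, namely $\lfloor L/2\rfloor-1$. Summing over the admissible progression $L\in\{5,8,11,\dots\}\cap[5,(n-2)/2]$ produces a double sum of the shape $\sum_{k\ge 1}(\lfloor (3k+2)/2\rfloor-1)$, which is $\Theta(n^2)$; a deliberately conservative evaluation (discarding the smallest terms and rounding) yields the explicit bound $N_3(n)\ge \lfloor (n-15)^2/72\rfloor$ for $n\ge 24$ with $n\equiv 0\pmod 3$, and in particular $N_3(n)\ge c\,n^2$ for an absolute constant $c>0$.

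The main obstacle is twofold. First, one must confirm that the degree-balancing system admits nonnegative integer book counts across the entire admissible range of $L$, reconciling the congruence $L\equiv 2\pmod 3$ and the ceiling/floor bounds of Corollary~\ref{cor:r3-delta-range} with the relation $2L+3s=n-2$. Second, the final summation must be made airtight: tracking the parity of $L$ inside $\lfloor L/2\rfloor-1$, the arithmetic-progression step of $3$, and the loss incurred by the conservative rounding that produces the stated denominator $72$ and shift $15$, even though the construction itself delivers the sharper order $\Theta(n^2/48)$.
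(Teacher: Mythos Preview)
Your construction and invariant are both sound, and the approach is genuinely different from the paper's. The paper builds all of its examples at a \emph{single} value of $\Delta=6+K$ (with $n=9+3K$): it places two pages on each core edge, then hangs chains of lengths $p$, $K-p$ from $a$, chains of lengths $q$, $K-q$ from $b$, and one chain of length $K$ from $c$. Their invariant is the unordered pair of chain--length multisets attached to $a$ and $b$ (after pinning $c$ as the unique core vertex with an unused page edge), yielding the count $\binom{\lfloor K/2\rfloor}{2}$. You instead attach two fans at a single core vertex, pad with books to equalize degrees, and let $\Delta$ vary with $L=\ell_1+\ell_2$; your invariant is the multiset of component sizes of the subgraph induced on degree-$3$ tail vertices, which in your family is exactly $\{\ell_1-1,\ell_2-1\}$. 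Since different $L$ already give different degree sequences, you only need the component invariant within a fixed $L$, and this is arguably cleaner than the paper's pinning-of-$c$ argument. Your sum $\sum_L (\lfloor L/2\rfloor-1)$ over $L\equiv 2\pmod 3$, $5\le L\le (n-2)/2$, is of order $n^2/48$, slightly stronger than the paper's $n^2/72$; the paper, on the other hand, demonstrates quadratic growth \emph{within a single degree sequence}, which your construction does not.

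The one part of your proposal that is not yet a proof is the last step: you assert that a ``deliberately conservative evaluation'' of the sum produces $\lfloor (n-15)^2/72\rfloor$, but you do not carry it out. This is routine but has to be done carefully, tracking the parity of $L$ inside $\lfloor L/2\rfloor-1$ along the arithmetic progression of step $3$ (your own ``main obstacle'' paragraph correctly flags this). Everything structural---the $2$-tree validity of each stacking step, the nonnegativity of $s$ and $p_{bc}$, the identification of the core as the set of maximum-degree vertices since $\Delta\ge (n+2)/2>3$, and the invariance of the degree-$3$ component multiset---checks out.
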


\begin{proof}
Fix an integer $K\ge 5$ and put $n=9+3K$, so $n\ge 24$ and $n\equiv 0\pmod 3$.
For integers $p,q$ with $1\le p<q\le \lfloor K/2\rfloor$, we construct a strong 
tricentral $2$-tree $G(p,q)$ on $n$ vertices, and we show that $G(p,q)\cong G(p',q')$ 
holds if and only if $(p,q)=(p',q')$.

Let $a,b,c$ span a triangle, which will be the core. On each core edge we stack 
two \emph{pages}: on $ab$ add $x_{ab}^a,x_{ab}^b$, on $bc$ add $x_{bc}^b,x_{bc}^c$, 
and on $ca$ add $x_{ca}^c,x_{ca}^a$; each page is adjacent precisely to the 
endpoints of its supporting edge. This produces a $2$-tree on $9$ vertices in 
which $\deg(a)=\deg(b)=\deg(c)=6$.

Given $(p,q)$, we now attach chains by successive stacking (each step stacks a 
new degree-$2$ vertex on an existing edge, hence preserves the $2$-tree property).
From $a$ we attach a chain of length $p$ starting at edge $\{a, x_{ab}^a\}$ and 
a chain of length $K-p$ starting at edge $\{a, x_{ca}^a\}$. From $b$ we attach 
a chain of length $q$ starting at edge $\{b, x_{bc}^b\}$ and a chain of length 
$K-q$ starting at edge $\{b, x_{ab}^b\}$. From $c$ we attach a single chain of 
length $K$ starting at edge $\{c, x_{ca}^c\}$, leaving edge $\{c, x_{bc}^c\}$ unused.

Exactly $p+(K-p)+q+(K-q)+K=3K$ vertices are added, so $|V(G(p,q))|=9+3K=n$.
Each core vertex gains exactly $K$ new neighbors, hence $\deg(a)=\deg(b)=\deg(c)=6+K$.
Every chain vertex has degree $2$ (if terminal) or $3$ (if internal), and each 
page has degree $2$ or $3$ since it supports at most one chain. Therefore all 
tail vertices have degree in $\{2,3\}$, and $G(p,q)$ is strong tricentral.

Among the core vertices, $c$ is uniquely characterized as the one for which the 
two incident page edges support chain-length multiset $\{0,K\}$ (one unused, one 
of length $K$). The other two core vertices have both incident page edges used, 
since $1\le p,q\le \lfloor K/2\rfloor <K$. Hence every isomorphism fixes $c$.
After fixing $c$, the only remaining symmetry is the transposition of $a$ and $b$.
Consequently, the unordered pair of multisets
\[
\mathcal{I}(p,q):=\bigl\{\{p,K-p\},\{q,K-q\}\bigr\}
\]
is an isomorphism invariant of $G(p,q)$.

Set $m=\lfloor K/2\rfloor$. For $1\le p<q\le m$ we have $p\le m\le K/2$, so 
$p<K-p$ and similarly $q<K-q$; in particular, $\{p,K-p\}$ and $\{q,K-q\}$ 
each consist of two distinct integers. Moreover, $p\neq q$ implies 
$\{p,K-p\}\neq \{q,K-q\}$. Thus $\mathcal{I}(p,q)$ uniquely determines the 
ordered pair $(p,q)$ up to swapping $a$ and $b$, and the constraint $p<q$ 
removes this ambiguity. Therefore $G(p,q)\cong G(p',q')$ if and only if $(p,q)=(p',q')$.

It follows that
\[
N_3(n)\ge\binom{m}{2}=\binom{\lfloor K/2\rfloor}{2}.
\]

We now establish the claimed bound by considering two cases.

Since $n=9+3K$, we have $(n-15)^2/72=(3K-6)^2/72=(K-2)^2/8$.

\textbf{Case 1:} $K=2t$ for some integer $t\ge 3$.
Then $m=\lfloor K/2\rfloor=t$ and
\[
\binom{m}{2}=\frac{t(t-1)}{2}\ge \frac{(t-1)^2}{2}
\quad\text{(using $t\ge t-1$)}
\]
\[
=\frac{(2t-2)^2}{8}=\frac{(K-2)^2}{8}
\ge \left\lfloor \frac{(K-2)^2}{8}\right\rfloor.
\]

\textbf{Case 2:} $K=2t+1$ for some integer $t\ge 2$.
Then $m=t$ and
\[
\frac{(K-2)^2}{8}=\frac{(2t-1)^2}{8}=\frac{4t^2-4t+1}{8}=\frac{t(t-1)}{2}+\frac{1}{8},
\]
so $\lfloor (K-2)^2/8\rfloor=t(t-1)/2=\binom{m}{2}$.

In both cases,
\[
N_3(n)\ge\binom{\lfloor K/2\rfloor}{2}\ge\left\lfloor \frac{(K-2)^2}{8}\right\rfloor
=\left\lfloor \frac{(n-15)^2}{72}\right\rfloor.
\]

The quadratic-growth statement follows by taking any $c<1/72$ and restricting to $n$ large enough so that $(n-15)^2/72\ge cn^2$.
\end{proof}
\begin{remark}
Table~\ref{Tab2} lists the exact numbers of non-isomorphic strong tricentral $2$-trees
with tail set $\{2,3\}$ for $3\le n\le 12$.
Although Theorem~\ref{thm:quadratic-lower} applies for $n\ge 24$ with $3\mid n$,
the data in Table~\ref{Tab2} already exhibits the onset of rapid growth in the tricentral
regime (for example at $n=11,12$), consistent with the quadratic lower-bound phenomenon previously above.
\end{remark}
\begin{table}[h]
\caption{Enumerative of strong tricentral 2-trees with tail set $\{2,3\}$ for $3 \leq n \leq 12$}
\vspace{.1 in}

\label{Tab2}
\centering
\small
\begin{tabular}{cccccc}
\toprule
$n$ & $\Delta$ & degree sequence & $x$ & $y$ & non-iso count \\
\midrule
3 & 2 & $(2,2,2)$ & 0 & 0 & 1 \\
\midrule
6 & 4 & $(4,4,4,2,2,2)$ & 0 & 3 & 1 \\
\midrule
7 & 4 & $(4,4,4,3,3,2,2)$ & 2 & 2 & 1 \\
\midrule
8 & 5 & $(5,5,5,3,2,2,2,2)$ & 1 & 4 & 1 \\
\midrule
9 & 5 & $(5,5,5,3,3,3,2,2,2)$ & 3 & 3 & 2 \\
9 & 6 & $(6,6,6,2,2,2,2,2,2)$ & 0 & 6 & 1 \\
\midrule
10 & 6 & $(6,6,6,3,3,2,2,2,2,2)$ & 2 & 5 & 4 \\
\midrule
11 & 6 & $(6,6,6,3,3,3,3,2,2,2,2)$ & 4 & 4 & 7 \\
11 & 7 & $(7,7,7,3,2,2,2,2,2,2,2)$ & 1 & 7 & 1 \\
\midrule
12 & 6 & $(6,6,6,3,3,3,3,3,3,2,2,2)$ & 6 & 3 & 2 \\
12 & 7 & $(7,7,7,3,3,3,2,2,2,2,2,2)$ & 3 & 6 & 10 \\
12 & 8 & $(8,8,8,2,2,2,2,2,2,2,2,2)$ & 0 & 9 & 1 \\
\bottomrule
\end{tabular}
\end{table}

The preceding sections reveal a clear progression from rigidity to combinatorial growth as the size of the central core increases.
We conclude by summarizing these transitions and outlining several directions in which the present framework may be extended.

\section{Conclusion and open problems}
We have given a complete structural analysis of strong $r$-central $2$-trees with tail degrees in $\{2,3\}$ for $r\in\{1,2,3\}$.
In the unicentral case, maximal rigidity occurs, with a unique realization for each $n$.
The bicentral case exhibits an intermediate regime, where uniqueness holds exactly for $x\in\{0,2\}$ and breaks down for larger values of $x$, while all feasible maximum degrees are realized.
In the tricentral case, combinatorial flexibility dominates: extremal configurations are unique, but the number of non-isomorphic realizations grows at least quadratically for infinitely many $n$.

Several directions remain open.
A natural problem is to determine the exact asymptotic growth rate of the number of non-isomorphic tricentral $2$-trees and to remove the congruence restrictions in the current quadratic lower bound.
It would also be of interest to characterize uniqueness and growth phenomena for $r$-central $2$-trees with $r\ge 4$, or under more general tail degree constraints.
Finally, exploring extremal and algorithmic properties within these classes, such as diameter, chromatic number, or Wiener index, may further illuminate the interaction between centrality, degree restrictions, and treewidth.

\end{document}